\DeclareSymbolFontAlphabet{\mathbb}{AMSb}
\DeclareSymbolFontAlphabet{\mathbbl}{bbold}
\newenvironment{claimproofend}[1][Proof of Claim]{\begin{proof}[#1]}{\end{proof}}
\newtheorem{lemma}{Lemma}[section]
\newtheorem*{lemma*}{Lemma}
\newtheorem{theorem}[lemma]{Theorem}
\newtheorem*{theorem*}{Theorem}
\newtheorem{corollary}[lemma]{Corollary}
\newtheorem{proposition}[lemma]{Proposition}
\newtheorem*{proposition*}{Proposition}
\newtheorem{fact}[lemma]{Fact}
\newtheorem*{fact*}{Fact}
\newtheorem*{notation*}{Notation}
\newtheorem*{conventions*}{Conventions}
\newtheorem*{remark*}{Remark}
\newtheorem*{corollary*}{Corollary}
\newtheorem*{conjecture*}{Conjecture}
\newtheorem*{problem*}{Problem}
\newtheorem*{question*}{Question}
\newtheorem{assumption*}{Assumption}
\theoremstyle{definition}
\newtheorem*{example*}{Example}
\newtheorem{definition}[lemma]{Definition}
\newtheorem*{definition*}{Definition}
\theoremstyle{remark}
\newtheorem*{claim*}{Claim}
\newtheorem*{case*}{Case}
\newtheorem*{construction*}{Construction}
\newtheorem*{exercise*}{Exercise}
\numberwithin{equation}{section}
\newcommand{\Z}{\mathbb{Z}}
\newcommand{\Q}{\mathbb{Q}}
\newcommand{\bs}{\backslash}
\newcommand\FF{{\mathcal F}}
\newcommand\RR{{\mathcal R}}
\newcommand\acl{\hbox{\rm acl}}
\newcommand\<{\langle}
\renewcommand\>{\rangle}
\newcommand{\tp}{\mathrm{tp}}
\newcommand{\qftp}{\mathrm{qftp}}
\newcommand{\Ra}{\Rightarrow}
\def\Ind#1#2{#1\setbox0=\hbox{$#1x$}\kern\wd0\hbox to 0pt{\hss$#1\mid$\hss}
	\lower.9\ht0\hbox to 0pt{\hss$#1\smile$\hss}\kern\wd0}
\def\ind{\mathop{\mathpalette\Ind{}}}
\def\notind#1#2{#1\setbox0=\hbox{$#1x$}\kern\wd0
	\hbox to 0pt{\mathchardef\nn=12854\hss$#1\nn$\kern1.4\wd0\hss}
	\hbox to 0pt{\hss$#1\mid$\hss}\lower.9\ht0 \hbox to 0pt{\hss$#1\smile$\hss}\kern\wd0}
\def\nind{\mathop{\mathpalette\notind{}}}
\def\includeE#1{{\lhook\kern-3.5pt\joinrel\smash{
			\mathop{\longrightarrow}\limits^{#1}}}}
\def\efor/{Example~\ref{E4}}
\def\BL/{Baldwin--Lachlan}
\def\Bu/{Buechler}
\def\Hr/{Hrushovski}
\def\lm/{locally modular}
\def\wm/{weakly minimal}
\def\nm/{non--modular}
\def\ss/{superstable}
\def\ud/{unidimensional}
\def\sm/{strongly minimal}
\def\abar{\bar{a}}
\def\dbar{\bar{d}}
\def\ebar{\bar{e}}
\def\hbar{\bar{h}}
\def\rbar{\bar{r}}
\def\sbar{\bar{s}}
\def\wbar{\bar{w}}
\def\xbar{\bar{x}}
\def\ybar{\bar{y}}
\def\zbar{\bar{z}}
\def\Mbar{\bar{M}}
\def\acl{{\rm acl}}
\def\acl{{\rm acl}}
\def\stp{{\rm stp}}
\def\tr/{trivial}
\def\nt/{non--trivial}
\def\st/{strong type}
\def\abar{\bar{a}}
\def\dbar{\bar{d}}
\def\ebar{\bar{e}}
\def\A{{\mathcal A}}
\def \F{{\mathcal F}}
\def\Q{{\mathbb Q}}
\def\Z{{\mathbb Z}}
\def\stp{{\rm stp}}
\def\acl{{\mathrm acl}}
\def\Fa0{{\FF^a_{\aleph_0}}}
\def\<{\langle}
\def\>{\rangle}
\newbox\smilebox
\newbox\anchorbox
\newbox\noanchorbox
\newbox\tempbox
\def\anchor{\hbox{\vtop{
			\hbox to \wd\smilebox{\hfil\vrule width.4pt height7pt depth1pt\hfil}
			\vskip  -11.5truept
			\hbox to \wd\smilebox{\hfil$\smile$\hfil}}}}
\def\noanchor{\hbox{\vtop{
			\hbox to \wd\anchorbox{\hfil\anchor\hfil}
			\vskip -14truept
			\hbox to \wd\anchorbox{\hfil/\hfil}}}}
\def\fg#1#2#3{\setbox\tempbox=\hbox{$\scriptstyle{#2}$}
	\ifnum\wd\anchorbox>\wd\tempbox\dimen255=\wd\anchorbox
	\else\dimen255=\wd\tempbox\fi
	{#1\,\vtop{\hbox to \dimen255{\hfil\anchor\hfil}
			\vskip -6truept
			\hbox to \dimen255{\hfil$\scriptstyle{#2}$\hfil}}
		\,#3}}
\def\nfg#1#2#3{\setbox\tempbox=\hbox{$\scriptstyle{#2}$}
	\ifnum\wd\noanchorbox>\wd\tempbox\dimen255=\wd\noanchorbox
	\else\dimen255=\wd\tempbox\fi
	{#1\,\vtop{\hbox to \dimen255{\hfil\noanchor\hfil}
			\vskip -6truept
			\hbox to \dimen255{\hfil$\scriptstyle{#2}$\hfil}}
		\,#3}}
\def\north#1#2{#1\,
	\hbox{$\bot$\llap {\hbox to\wd1 {\hfil $/$\hfil}}}
	\,#2}
\def\nao#1#2#3{#1\  \hbox{\vtop{ 
			\baselineskip=4pt
			\hbox{$\bot$\llap {\hbox to\wd1 {\hfil $/$\hfil}}
				\hskip .05em \llap{\hbox{$^{\scriptscriptstyle{a}}$}}}\hbox{$\scriptstyle
				{#2}$}}}\, #3}
\def\includeE#1{{\lhook\kern-3.5pt\joinrel\smash{
			\mathop{\longrightarrow}\limits^{#1}}}}
\def\efor/{Example~\ref{E4}}
\def\BL/{Baldwin--Lachlan}
\def\Bu/{Buechler}
\def\Hr/{Hrushovski}
\def\lm/{locally modular}
\def\wm/{weakly minimal}
\def\nm/{non--modular}
\def\ss/{superstable}
\def\ud/{unidimensional}
\def\sm/{strongly minimal}
\def\abar{\overline{a}}
\def\dbar{\overline{d}}
\def\ebar{\overline{e}}
\def\hbar{\overline{h}}
\def\rbar{\overline{r}}
\def\sbar{\overline{s}}
\def\wbar{\overline{w}}
\def\xbar{\overline{x}}
\def\ybar{\overline{y}}
\def\zbar{\overline{z}}
\def\Mbar{\overline{M}}
\def\acl{{\rm acl}}
\def\tp{{\rm tp}}
\def\stp{{\rm stp}}
\def\tr/{trivial}
\def\nt/{non--trivial}
\def\st/{strong type}
\def\TV/{Tarski--Vaught}
\def\sc/{sound construction}
\def\ac/{atomic construction}
\def\fal/{functional}
\def\upl/{unique parallel lines}
\def\chp/{categorical in a higher power}
\title{Worst case expansions of complete theories}
\author{Samuel Braunfeld}
\author{Michael C. Laskowski$^*$}
\thanks{$^*$Partially supported
by NSF grant DMS-1855789}
\subjclass{03C45}
    \def\E{{\mathcal E}}
    \def\CC{{\mathfrak C}}
    \def\G{{\mathcal G}}
\begin{document}

\begin{abstract}
	Given a complete theory $T$ and a subset $Y \subseteq X^k$, we precisely determine the {\em worst case complexity}, with respect to further monadic expansions, of an expansion $(M,Y)$ by $Y$ of a model $M$ of $T$ with universe $X$.  
	In particular, although by definition monadically stable/NIP theories are robust under arbitrary monadic expansions, 
	we show that monadically NFCP (equivalently, mutually algebraic) theories are the largest class that is robust under anything beyond monadic expansions. We also exhibit a paradigmatic  structure for the failure of each of monadic NFCP/stable/NIP  and prove each of these paradigms definably embeds into a monadic expansion of a sufficiently saturated model of any theory without the corresponding property.  
\end{abstract}

\maketitle

\section{Introduction}

The idea of measuring the complexity of a first order theory by determining the worst case complexity of its models under expansions by arbitrarily many unary (monadic) predicates was introduced by Baldwin and Shelah in \cite{BS}. For example, the theory ACF of algebraically closed fields is maximally complex with respect to this measure, even though it is classically very simple and has many well-studied tame monadic expansions. One way to see this complexity is to first name an infinite linearly independent set by a unary predicate $A$; then any graph $G$ with vertex set $A$ is definable in the further expansion by the unary predicate $B_G = \set{g + h : g, h \in A, (g,h) \text{ is an edge in $G$}}$. As any structure in a finite language is definable in a monadic expansion of a graph (e.g., by the construction in \cite[Theorem 5.5.1]{Hodges}), we may for example define models of ZFC in monadic expansions of models of ACF.

In contrast to ACF, some theories such as $Th(\Q, <)$ are monadically NIP, i.e. no monadic expansion has the independence property. (The definitions of NIP, as well as stability and NFCP, are recalled in the next section.)  If a theory is not monadically NIP then it can define arbitrary graphs in unary expansions of its models, as ACF does, and thus is also maximally complex by our measure. Similarly, there exist monadically stable theories such as the theory of an equivalence relation with infinitely many infinite classes, and monadically NFCP theories (which coincide with the mutually algebraic theories of \cite{JSL}) such as $Th(\Z, succ)$.

Our first result shows that the random graph, $(\Q, <)$, and the equivalence relation with infinitely many infinite classes are paradigms of structures that respectively are not monadically NIP/stable/NFCP, in the sense that we may define these paradigms on singletons in a monadic expansion of any sufficiently saturated model without the corresponding property (Theorem \ref{paradigm}).

For our main result, recall  that while monadically NIP and monadically stable theories are closed under monadic expansions by definition, the monadically NFCP theories satisfy a stronger closure property: if $T$ is monadically NFCP and $M \models T$, then any expansion of $M$ by arbitrarily many relations definable in monadically NFCP structures with the same universe as $M$ remains monadically NFCP \cite{JSL}.
Our main result proves  that any attempt to extend these closure statements to larger classes of relations fails spectacularly, producing expansions of models defining arbitrary graphs.

Before stating our main theorem, we must introduce an extremely simple class of theories. 

\begin{definition} \label{pur mon}  A complete theory $T$ is {\em purely monadic} if, for every model $M\models T$ with universe $\lambda$, every definable (with parameters) 
	$Y\subseteq\lambda^k$ is definable in a monadic structure $(\lambda,U_1,\dots,U_n)$.
\end{definition}

\begin{theorem}  \label{big} Suppose a complete theory $T$ is not purely monadic and $Y\subseteq\lambda^k$ is not definable in a purely monadic structure, where $|\lambda| \geq |T|$.
	
	If either $T$ is not monadically NFCP or $Y$ is not definable in a monadically NFCP structure, then there is $M \models T$ with universe $\lambda$ such that the expansion $(M, Y)$ is not monadically NIP.
	
	Otherwise, if $T$ is monadically NFCP and $Y$ is definable in a monadically NFCP structure, then for every $M \models T$ with universe $\lambda$, the expansion $(M, Y)$ is monadically NFCP.
\end{theorem}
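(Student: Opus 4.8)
The plan is to treat the two clauses of the theorem separately: the ``Otherwise'' clause is a short deduction from the closure theorem already recalled above, while the first clause requires a genuine construction.

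\emph{The positive case.} Suppose $T$ is monadically NFCP and $Y$ is definable in some monadically NFCP structure $N$ with universe $\lambda$. For any $M \models T$ with universe $\lambda$, the structure $(M,Y)$ is an expansion of the monadically NFCP structure $M$ by a single relation, $Y$, which is definable in a monadically NFCP structure (namely $N$) with the same universe as $M$. The conclusion is then immediate from the closure property for monadically NFCP theories recalled above (from \cite{JSL}), applied with ``arbitrarily many'' taken to be one.

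\emph{The negative case.} Here the aim is to choose $M \models T$ with universe $\lambda$ so that a further monadic expansion of $(M,Y)$ has the independence property --- for instance, by defining an arbitrary bipartite graph between two infinite sets of singletons. I would do this by exhibiting a \joined{} inside a monadic expansion of $(M,Y)$ and then invoking the implication (presumably established earlier in the paper) that the presence of a \joined{} precludes monadic NIP. The raw material is of three kinds. First, since $T$ is not purely monadic, some model of $T$ carries a tuple-coding configuration. Second, since $Y$ is not definable in a purely monadic structure, the fixed structure $(\lambda,Y)$ likewise carries a tuple-coding configuration. Third --- and this is exactly what the hypothesis of this case supplies --- at least one of the two sides fails to be monadically NFCP (either $T$, or $(\lambda,Y)$), so by the paradigm theorem (Theorem~\ref{paradigm}) a monadic expansion of a sufficiently saturated model on that side defines the equivalence relation with infinitely many infinite classes on singletons, i.e.\ an infinite definable family of infinite, freely varying sets. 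Joining the $T$-side and $Y$-side tuple-coding configurations \emph{along} this unbounded family is precisely what turns two isolated coding configurations into a single configuration with genuine, unbounded independence --- just as in the grid on $\omega\times\omega$ equipped with its row- and column-equivalence relations, where the relation coming from $Y$ and the one coming from $M$ cross-cut and a single monadic predicate then names an arbitrary bipartite graph.

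\emph{The main obstacle.} The crux is to realize the $T$-side data and the fixed $Y$-side data simultaneously inside one structure $(M,Y)$ with $M \models T$ and universe exactly $\lambda$, placing the $T$-side configuration \emph{transversally} to the $Y$-side configuration so that the coding formulas cross-cut rather than interfere. The flexibility needed comes from our freedom in choosing $M$: I would start from a sufficiently saturated $N \models T$ in which the $T$-tuple-coding configuration, and (when it lies on the $T$-side) the equivalence relation with infinitely many infinite classes, live on prescribed tuples; use the saturation and homogeneity of $N$ to position those tuples generically with respect to the coordinates of the configuration already carried by $Y$ on $\lambda$; then pass to an elementary substructure of $N$ of cardinality $|\lambda|$ --- here $|\lambda| \ge |T|$ is used --- and transport its universe onto $\lambda$ compatibly with the $Y$-configuration. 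Checking that this amalgam really is a \joined{}, i.e.\ that the transverse placement is generic enough and that finitely many added monadic predicates carry out the intended linking between the two sides, is where the bulk of the combinatorial bookkeeping lies; with that in hand the dichotomy is complete.
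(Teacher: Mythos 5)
Your positive case is fine: it is the paper's Lemma \ref{positive} in the form of the closure theorem from \cite{JSL}, applied to the single relation $Y$. The negative case, however, has a genuine gap, and it sits exactly at the point your sketch leans on hardest. You propose to get the ``equivalence relation with infinitely many infinite classes'' from Theorem \ref{paradigm} on whichever side fails monadic NFCP, but Theorem \ref{paradigm} only produces that configuration in a monadic expansion of \emph{some} (sufficiently saturated) model. On the $T$-side this is harmless, since we are free to choose $M$; but in the case where $T$ \emph{is} monadically NFCP and it is $Y$ that is not monadically NFCP definable, the non-NFCP side is the fixed structure $(\lambda,Y)$, and $Y$ is a concrete subset of $\lambda^k$ that cannot be replaced by a saturated elementarily equivalent copy --- your ``transport the universe onto $\lambda$ compatibly with the $Y$-configuration'' only addresses the $T$-side. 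The paper's way around this is Corollary \ref{config}: finitary approximations to the equivalence-relation (or order) configuration that exist in \emph{every} model of a non-monadically-NFCP, monadically NIP theory, which can be overlaid on the fixed $Y$-side data by a permutation of $\lambda$; only after forming the combined structure $(M,Y)$ does one pass to an elementary extension (legitimate, since failure of monadic NIP is a property of its theory) to recover the infinite configuration and exhibit coding in the sense of Fact \ref{fact:8.1.10}.

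A second gap is your treatment of the ``not purely monadic'' hypotheses. You assert that a non-purely-monadic side ``carries a tuple-coding configuration'' and that a ``joined tuple-coding configuration'' precludes monadic NIP, but neither notion is defined and no such lemma is available; the actual criterion is admitting coding (a definable bijection $A\times B\to C$ on infinite sets of singletons in a monadic expansion), and producing it from the overlay is the real content. Moreover, the two sides genuinely split into cases: when a side is mutually algebraic but not purely monadic, what it contributes is not an equivalence-relation-type configuration but an infinite definable matching extracted by the analysis of Lemmas \ref{monone}--\ref{monthree}, and joining that matching with the other side's configuration (the paper's Proposition \ref{prop2}) is a different argument from joining two non-NFCP configurations (Proposition \ref{prop1}), including details you do not touch, such as the definable-successor trick needed to convert the unstable (order) approximations into a coding formula. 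Your overall instinct --- cross-cut the two sides by positioning one generically against the other, then name pieces with unary predicates --- matches the paper's permutation $\sigma$, but as written the proposal presupposes saturation where none is available and an unproved ``joining'' lemma, so it does not yet constitute a proof.
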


The cases ruled out by the hypotheses of this theorem are straightforward, and are handled by Fact \ref{restbig}. 

 Section 3 is dedicated to the result on paradigmatic failures of monadic properties mentioned above, while in Section 4 we find a canonical configuration present in any structure that is monadically NFCP but not purely monadic. In Section 5, Theorem \ref{big} is then proved in cases, by suitably overlaying the available configurations to monadically define arbitrary graphs.

\subsection{Acknowledgments} We thank the referee for a careful reading, yielding corrections and suggestions for clarifying the exposition.

\section{Preliminaries}

We recall the following standard conditions on a partitioned formula $\phi(\xbar,\ybar)$, when we are working in a sufficiently saturated model $\CC$ of a complete theory $T$:
$\phi(\xbar,\ybar)$ has the {\em finite cover property (FCP)} if, for arbitrarily large $n$, there are $\<\abar_i:i<n\>$ in $\CC$ such that, 
$$\CC \models \neg\exists \xbar ( \bigwedge_{i<n} \phi(\xbar,\abar_i)) \wedge \bigwedge _{\ell <n} \exists \xbar ( \bigwedge_{i < n, i\neq \ell} \phi(\xbar,\abar_i))$$
$\phi(\xbar,\ybar)$ has the {\em order property} if, for each $n$, there are $\<\abar_i:i<n\>$ in $\CC$ such that, for each $k<n$,
$$\CC \models \bigwedge_{k<n}\left[\exists \xbar (\bigwedge_{i<k} \phi(\xbar,\abar_i)\wedge\bigwedge_{k\le i<n} \neg\phi(\xbar,\abar_i))\right]$$
$\phi(\xbar,\ybar)$ has the {\em independence property} if, for each $n$, there are $\<\abar_i:i<n\>$ in $\CC$ such that, 
$$\CC \models \bigwedge_{s\subseteq [n]}\left[\exists \xbar (\bigwedge_{i \in s} \phi(\xbar,\abar_i)\wedge\bigwedge_{i \in n \bs s} \neg\phi(\xbar,\abar_i))\right]$$

A complete theory $T$ is {\em NFCP} if no partitioned formula $\phi(\xbar,\ybar)$ has the FCP, $T$ is {\em stable} if no partitioned formula $\phi(\xbar,\ybar)$ has the order property, and
$T$ is {\em NIP} if no partitioned formula $\phi(\xbar,\ybar)$ has the independence property.  

It is well known that for complete theories,  NFCP$ \ \Rightarrow \ \hbox{stable} \ \Rightarrow\ $ NIP, and as purely monadic theories are NFCP (e.g., by the comment after Fact \ref{MAcrit}),  we have the following implications for a complete theory $T$.
$$\hbox{purely monadic}\quad \Rightarrow \quad \hbox{mon.\ NFCP}\quad\Rightarrow \quad\hbox{mon.\ stable}\quad\Rightarrow\quad\hbox{mon.\ NIP}$$

We now introduce some definitions for convenience.

\begin{definition}
Given a complete theory $T$, a cardinal $\lambda$, a subset $Y\subseteq\lambda^k$ for some $k\ge 1$, and a property $P$ of theories (we will be particularly interested in monadic NIP), we say {\em $(T,Y)$ is always $P$} if $Th(M,Y)$ has $P$ for all models $M$ of $T$ with universe $\lambda$.
\end{definition}

\begin{definition} \label{p definable}  A subset $Y\subseteq \lambda^k$ is {\em monadically definable} if it is definable in some monadic structure $(N,U_1,\dots,U_n)$.
	
	$Y\subseteq\lambda^k$ is {\em monadically NFCP definable} if it is definable in some monadically NFCP structure $N$.
	Analogously, $Y$ is {\em monadically stable/monadically NIP definable} if it is definable in some monadically stable/monadically NIP structure $N$.
	
	Equivalently, a subset $Y\subseteq\lambda^k$ is monadically definable (respectively, monadically NFCP/stable/NIP definable) if and only if
	the structure $N=(\lambda,Y)$ in a language with a single $k$-ary predicate symbol, is purely monadic (respectively, monadically NFCP/stable/NIP).
\end{definition}

Thus, we have the following implications for $Y \subseteq \lambda^k$.
$$\hbox{mon.\ definable}\ \Rightarrow \ \hbox{mon.\ NFCP def}\ \Rightarrow \ \hbox{mon.\ stable def}\ \Rightarrow\ \hbox{mon.\ NIP def}$$

The hypotheses of Theorem \ref{big} ruled out the cases where $T$ is purely monadic or $Y$ is monadically definable. The following fact is immediate from unpacking definitions, but we include it for completeness.

\begin{fact} \label{restbig}  Let $T$ be a complete theory, $Y\subseteq \lambda^k$, and $P \in$ \{purely monadic, monadically NFCP, monadically stable, monadically NIP\}.
	\begin{enumerate}
		\item  If $T$ is purely monadic and $Y$ is $P$ definable then $(T,Y)$ is always $P$.
		\item  If $T$ is $P$ and $Y\subseteq \lambda^k$ is monadically definable then
		$(T,Y)$ is always $P$.
	\end{enumerate}
\end{fact}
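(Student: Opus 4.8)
The plan is to unwind the definitions; everything then reduces to two elementary closure facts about each of the four properties $P\in$ \{purely monadic, monadically NFCP, monadically stable, monadically NIP\}. First, $P$ is inherited by reducts and is unchanged by naming finitely many parameters: a reduct has fewer definable relations, and naming a constant $c$ is the same as adjoining the unary predicate $\{c\}$, so every monadic expansion of a reduct (or of a parameter-naming) of a structure $N$ is again a reduct of a monadic expansion of $N$. Second, adjoining unary predicates is transitive, so every monadic expansion of a monadically NFCP/stable/NIP structure is again monadically NFCP/stable/NIP; and since each basic relation of a purely monadic structure on $\lambda$ is definable, over some parameters, in a monadic structure on $\lambda$ (and those parameters may be named by singleton unary predicates), every relation definable in a purely monadic structure on $\lambda$ is already definable in a finite monadic structure on $\lambda$.

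For part (2), fix $M\models T$ with universe $\lambda$ and write $Y$ as a relation definable in a monadic structure $(\lambda,\bar U)$ over a tuple $\bar b$ from $\lambda$; then, naming $\bar b$ by singletons, $(M,Y)$ is a reduct of a monadic expansion $(M,\bar U,\bar b)$ of $M$. If $P$ is ``monadically $P'$'' for $P'\in$ \{NFCP, stable, NIP\}, then an arbitrary monadic expansion $(M,Y,\bar V)$ of $(M,Y)$ is a reduct of $(M,\bar U,\bar b,\bar V)$, which is a monadic expansion of $M$ and hence $P'$; by the first closure fact $(M,Y,\bar V)$ is then $P'$, and as $\bar V$ was arbitrary, $(M,Y)$ is monadically $P'$. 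If $P$ is ``purely monadic'', then in any given formula of $(M,Y)$ both $Y$ and each occurring $M$-relation are definable in finite monadic structures on $\lambda$, so that formula defines a monadically definable set; hence $(M,Y)$ is purely monadic.

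Part (1) is the mirror image. Fix $M\models T$ with universe $\lambda$, choose a structure $N$ on $\lambda$ with property $P$ --- a monadic structure, in the purely monadic case --- in which $Y$ is definable over a tuple $\bar b$, and use that $T$ is purely monadic to express each $M$-relation inside a finite monadic structure on $\lambda$. In the ``monadically $P'$'' case, collecting the relevant unary predicates into a tuple $\bar W$ and naming $\bar b$ exhibits an arbitrary monadic expansion $(M,Y,\bar V)$ of $(M,Y)$ as a reduct of $(N,\bar W,\bar b,\bar V)$, a monadic expansion of $N$ and hence $P'$; so $(M,Y)$ is monadically $P'$. In the purely monadic case $N$ is itself monadic, so $(M,Y)$ is, formula by formula, a reduct of a finite monadic structure on $\lambda$, hence purely monadic.

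There is no genuine obstacle --- the proof is bookkeeping --- but the point to keep in mind is that in the three ``monadically $P'$'' cases one must check that $(M,Y)$ is \emph{monadically} $P'$, not merely $P'$; this is precisely why the auxiliary unary predicates $\bar V$ are carried along from the outset, and why the transitivity of ``adjoining unary predicates'' in the second closure fact is what the argument really turns on.
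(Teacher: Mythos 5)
Your proof is the same definition-unwinding the paper intends: the paper offers no argument at all, declaring the fact immediate, and your two closure observations (preservation of each $P$ under definitional reducts with finitely many named parameters, and transitivity of adjoining unary predicates) are exactly the right bookkeeping. The one place to be careful is that ``$(T,Y)$ is always $P$'' requires the \emph{complete theory} $Th(M,Y)$ to have $P$, and for the monadic properties this quantifies over monadic expansions of all models of $Th(M,Y)$; your argument only treats monadic expansions of the particular structure $(M,Y)$, and ``every monadic expansion of this one (possibly unsaturated) model is NIP/stable/NFCP'' is not, by definition alone, the same as the theory being monadically NIP/stable/NFCP. The repair stays entirely inside your framework: given any $(M',Y')\models Th(M,Y)$, its $L$-reduct is a model of $T$ (so pure monadicity, resp.\ monadic $P'$, of $T$ applies to it directly), and its $\{Y\}$-reduct is a model of $Th(\lambda,Y)$, which by the ``equivalently'' clause of Definition~\ref{p definable} is purely monadic (resp.\ monadically NFCP/stable/NIP) under the hypotheses on $Y$; hence the interpretation $Y'$ is again monadically (resp.\ $P$-) definable over the universe of $M'$, and your reduct argument runs verbatim for $(M',Y')$ and an arbitrary unary expansion of it. With that adjustment the argument is complete and matches the paper's intended ``unpack the definitions'' proof.
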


There are many equivalents to monadic NFCP (e.g., see \cites{MCLarch,JSL,BLcell}), monadic stability (see \cites{BS, TD}),  and monadic NIP (see \cites{BS,ShHanf,BLmonNIP}).  What we use is encapsulated in the rest of this section.

\begin{definition}
	Let $T$ be a complete theory.
	
	$T$ is {\em weakly minimal} if for any pair $M\preceq N$ of models, every non-algebraic 1-type $p\in S_1(M)$ has a unique non-algebraic
	extension $q\in S_1(N)$.
	
	$T$ is {\em (forking) trivial} if whenever $\set{A,B,C}$ is pairwise forking-independent over $D$, then it is an independent set over $D$.
	
	$T$ is {\em totally trivial} if for all $A,B,C,D$, if $A \ind_D B$ and $A \ind_D C$ then $A \ind_D BC$. (This is obtained from the definition of triviality by removing the hypothesis that $B \ind_D C$.)
\end{definition}

\begin{fact}[\cite{JSL}*{Theorem~3.3}]   \label{MAchar} The following are equivalent for a complete theory $T$.
	\begin{enumerate}
		\item  $T$ is monadically NFCP.
		\item  $T$ is mutually algebraic (see Definition~\ref{MA} below).
		\item  $T$ is weakly minimal and trivial.
	\end{enumerate}
\end{fact}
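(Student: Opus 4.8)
The plan is to establish the cycle $(2)\Rightarrow(1)\Rightarrow(3)\Rightarrow(2)$, in which $(2)\Leftrightarrow(3)$ is a piece of structure theory for weakly minimal trivial theories and the implication that actually involves monadic expansions is $(1)\Rightarrow(3)$, handled by contraposition. I use freely that $T$ is \emph{mutually algebraic} exactly when every formula $\varphi(\bar x)$ is $T$-equivalent to a Boolean combination of mutually algebraic formulas, where $\psi(\bar x)$ with its variables partitioned into blocks is \emph{mutually algebraic} if a single bound $K$ works, uniformly over all models, for the number of ways a value of any one block extends to a solution of $\psi$ (Definition~\ref{MA}); note that being mutually algebraic with respect to a partition implies the same with respect to any coarsening of it.

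For $(2)\Rightarrow(1)$ I would invoke two facts about mutual algebraicity from \cite{JSL}. First, mutual algebraicity of a structure is preserved under expansion by unary predicates (indeed by any relations definable in mutually algebraic structures with the same universe), so every monadic expansion of a model of $T$ is again mutually algebraic. Second, mutual algebraicity implies NFCP: if $\psi(\bar x,\bar y)$ is mutually algebraic then every instance $\psi(M,\bar a)$ has size $\le K$, and a short Helly-type count shows that a family of finite sets of size $\le K$ cannot witness the finite cover property for any $n>K+1$; since NFCP is preserved under Boolean combinations of a formula in its object variables and under adding parameters, and every formula of a mutually algebraic structure is a Boolean combination of mutually algebraic (hence, after coarsening the partition, NFCP) formulas, mutual algebraicity yields NFCP. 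Applying this to each monadic expansion of $T$ shows $T$ is monadically NFCP.

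For $(3)\Rightarrow(2)$, assume $T$ is weakly minimal and trivial. Over a weakly minimal theory, forking of a singleton over a set is exactly nonalgebraic algebraic dependence, and triviality makes the induced pregeometry on the realizations of any nonalgebraic type trivial; the upshot is that $\acl$ is \emph{coordinatewise}, i.e.\ up to $\acl(\emptyset)$ the algebraic closure of a tuple is the union of the algebraic closures of its entries (indeed $T$ is totally trivial). Now fix definable $Y=\psi(M)\subseteq\lambda^k$ with $\psi$ over $\emptyset$. Given $\bar a\in Y$, partition its coordinates into \emph{clusters} according to forking over $\acl(\emptyset)$; by the coordinatewise behaviour of $\acl$, each cluster lies in $\acl$ of one of its members, so within a cluster $\bar a$ satisfies only bounded algebraic relations while distinct clusters are mutually independent. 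Since a weakly minimal theory is superstable, compactness bounds the number of cluster-patterns and of bounded algebraic relations that can appear in $\psi$; each resulting ``algebraic piece'' is, after permuting variables, a mutually algebraic formula, and the independence-across-clusters constraints are absorbed because negations of mutually algebraic formulas are again Boolean combinations of such. Hence $\psi$ is a finite Boolean combination of mutually algebraic formulas, so $T$ is mutually algebraic.

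The main obstacle is $(1)\Rightarrow(3)$, which I would prove contrapositively: if $T$ is not weakly minimal or not trivial, then some monadic expansion of some model of $T$ has the finite cover property. As monadically NFCP implies monadically stable and monadically stable theories are trivial \cite{BS}, a non-trivial $T$ is already not monadically NFCP. So suppose $T$ is trivial and NFCP but not weakly minimal: there are $M\prec N$ and a nonalgebraic $p\in S_1(M)$ with at least two distinct nonalgebraic extensions over $N$. Iterating in a saturated model produces an infinite ``nonalgebraic splitting tree'' of $1$-types, all over small parameter sets. The plan is then to name, by finitely many unary predicates, subsets of a single ``object'' sort that encode positions in this tree richly enough that, in the expansion, some formula $\varphi(x,\bar y)$ admits, for every $n$, parameters $\bar a_0,\dots,\bar a_{n-1}$ with $\bigcap_{i<n}\varphi(M,\bar a_i)=\emptyset$ while every $(n-1)$-fold sub-intersection is nonempty --- i.e.\ witnessing FCP. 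Carrying out this last step, and in particular choosing the predicates so that the bound defining the family is uniform in $n$, is the crux; it is the NFCP-level analogue of the coding configurations used for the monadic stability/NIP paradigms, and it is precisely here that the monadic hypothesis is indispensable, since for instance an equivalence relation with infinitely many infinite classes is NFCP yet not weakly minimal. This closes the cycle.
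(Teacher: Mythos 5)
The paper does not prove this statement at all: it is quoted verbatim as Fact~\ref{MAchar} from \cite{JSL}*{Theorem~3.3}, so what you are really attempting is a self-contained reproof of a substantial external theorem, and as it stands the attempt has genuine gaps at exactly the points where that theorem is hard. The most serious one is in your contrapositive for $(1)\Rightarrow(3)$: in the remaining case ($T$ trivial, NFCP, stable, but not weakly minimal) you only describe a \emph{plan} to name unary predicates so that some formula witnesses the finite cover property, and you yourself flag carrying it out as ``the crux.'' That construction is the entire content of this direction; it is of the same nature as what the paper does in Fact~\ref{wm} and Theorem~\ref{paradigm}(3) (a nonalgebraic type forking over a model, a Morley sequence $B$, and the arrays $I_n$ giving an equivalence-relation-like configuration, after which one names finitely many elements in each ``class'' to force fcp), and until it is written down the cycle $(1)\Rightarrow(3)$ is not closed.

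Two further steps are asserted rather than proved. In $(2)\Rightarrow(1)$ you claim that NFCP passes through Boolean combinations: the Helly-type count only applies to formulas whose instances are uniformly finite, and negations of mutually algebraic formulas define co-small sets, so the bound does not transfer; non-fcp of each piece of a Boolean combination does not in general give non-fcp of the combination, and bridging this is part of what \cite{JSL} actually does (compare Fact~\ref{MAcrit}, which is about reducing \emph{every} formula to such combinations, not about fcp bookkeeping). In $(3)\Rightarrow(2)$, the phrases ``compactness bounds the number of cluster-patterns'' and ``the independence-across-clusters constraints are absorbed'' skip the real difficulty, which is uniformity: one must produce a single, definable Boolean combination of mutually algebraic formulas equivalent to $\varphi$, not merely a tuple-by-tuple decomposition of its solution set using coordinatewise $\acl$; this is the heart of the weakly-minimal-plus-trivial $\Rightarrow$ mutually algebraic direction in Laskowski's work. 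The parts you handle by citation (e.g.\ non-triviality already ruling out monadic stability, via Fact~\ref{fact:4.2.6}) are fine, but given the explicit hole in $(1)\Rightarrow(3)$ and the two unjustified transfers above, the proposal does not yet constitute a proof; citing \cite{JSL}*{Theorem~3.3}, as the paper does, remains the honest option.
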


Although we will not explicitly use it, ``trivial'' could be replaced by ``totally trivial'' in $(3)$, since they are equivalent assuming weak minimality, e.g. by \cite[Proposition 5]{Triv}.

We will make use of the following sufficient condition from \cite{BS} for monadically defining arbitrary graphs, or equivalently by Fact \ref{fact:8.1.10}, for the failure of monadic NIP.

\begin{definition}  \label{coding}
  A structure $M$ {\em admits coding} if there are infinite subsets $A,B,C\subseteq M^1$ and a formula $\phi(x,y,z)$ whose restriction to $A\times B\times C$
	is the graph of a bijection $f:A\times B\rightarrow C$.  A theory $T$ {\em (monadically) admits coding} if (some monadic expansion $M^*$ of) some model $M$ of $T$
	admits coding.
	\end{definition}  


\begin{fact} [\cites{BS, BLmonNIP}] \label{fact:8.1.10}
	The following are equivalent for a complete theory $T$.
	\begin{enumerate}
		\item  $T$ is monadically NIP.
		\item  $T$ does not monadically admit coding.
		\item  There is a graph that is not definable in any monadic expansion of any model of $T$.
	\end{enumerate}
\end{fact}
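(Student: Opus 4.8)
The plan is to split the three-way equivalence into $(1)\Leftrightarrow(2)$ and $(1)\Leftrightarrow(3)$, quoting the Baldwin--Shelah machinery for the single substantive implication.

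For $(1)\Leftrightarrow(2)$, I would handle each direction contrapositively. For $(1)\Rightarrow(2)$: suppose some monadic expansion $M^*$ of some $M\models T$ admits coding, witnessed by infinite $A,B,C\subseteq (M^*)^{1}$ and a formula $\phi(x,y,z)$ whose restriction to $A\times B\times C$ is the graph of a bijection $f:A\times B\to C$. Then, by naming finitely many further unary predicates, $M^*$ can define an arbitrary graph --- this is precisely the construction sketched for $\mathrm{ACF}$ in the introduction, and in fact, by \cite[Theorem 5.5.1]{Hodges}, every structure in a finite language becomes definable in a further monadic expansion. In particular the random graph is definable in such an expansion; since it has the independence property, $T$ is not monadically NIP. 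The reverse implication $(2)\Rightarrow(1)$ --- equivalently, that failure of monadic NIP forces $T$ to monadically admit coding --- is the heart of the matter and the step I expect to be the main obstacle in a proof from scratch. Given a formula $\phi(\xbar,\ybar)$ with the independence property in some monadic expansion of some model of $T$, one passes to a saturated model, replaces $\phi$ by a formula whose independence is witnessed along an indiscernible sequence (via Erd\H{o}s--Rado and compactness), and then adds finitely many more unary predicates recording the shattering pattern to extract infinite sets $A,B,C$ and a formula defining a bijection $A\times B\to C$ on them. This is the Baldwin--Shelah argument of \cite{BS}, with the streamlined treatment via indiscernibles in \cite{BLmonNIP}, and I would cite it rather than reproduce the combinatorial bookkeeping.

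For $(1)\Leftrightarrow(3)$, the forward direction $(1)\Rightarrow(3)$ is easy: if $T$ is monadically NIP then $Th(M^*)$ is NIP for every monadic expansion $M^*$ of every model, so any graph definable with parameters in such an $M^*$ is itself NIP (the structure induced on a parameter-definable set of an NIP structure is NIP); as the random graph has the independence property, it witnesses $(3)$. For $\neg(1)\Rightarrow\neg(3)$: if $T$ is not monadically NIP, then by $(1)\Leftrightarrow(2)$ it monadically admits coding, so by the construction already cited --- carried out inside a model of $T$ chosen (via compactness and upward L\"owenheim--Skolem for the expanded theory) large enough that the set ``$A$'' can receive the vertex set of the given graph --- every graph is definable in a monadic expansion of some model of $T$, contradicting $(3)$. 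Combining the two equivalences completes the proof.
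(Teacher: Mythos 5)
The paper does not prove this statement at all: it is quoted as a Fact with citations to \cite{BS} and \cite{BLmonNIP}, which is exactly where you send the one substantive implication (that failure of monadic NIP yields monadic coding). Your proposal is correct and consistent with the paper's treatment --- the elementary directions you supply (coding plus finitely many further unary predicates defines arbitrary graphs, hence the random graph, while monadic NIP is inherited by structures induced on definable sets of singletons) are sound and in fact mirror the construction the paper itself uses in the proof of Theorem~\ref{paradigm}(1).
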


\begin{fact} [\cites{BS, TD}] \label{fact:4.2.6}
	The following are equivalent for a stable complete theory $T$.
	\begin{enumerate}
		\item $T$ is monadically stable.
		\item $T$ is monadically NIP.
		\item $T$ does not admit coding.
		\item $T$ is totally trivial and forking is transitive on singletons, i.e. for all $D$, if $a \nind_D b$ and $b \nind_D c$ then $a \nind_D c$.
	\end{enumerate}
\end{fact}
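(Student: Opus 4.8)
The plan is to run the cycle $(1)\Rightarrow(2)\Rightarrow(3)\Rightarrow(4)\Rightarrow(1)$. The implication $(1)\Rightarrow(2)$ is immediate: stability implies NIP, and both ``monadically stable'' and ``monadically NIP'' quantify over all expansions of all models of $T$ by unary predicates. For $(2)\Rightarrow(3)$, a model of $T$ is a monadic expansion of itself with no new predicates, so if $T$ admits coding then $T$ monadically admits coding, and hence $T$ is not monadically NIP by Fact \ref{fact:8.1.10}.

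For $(3)\Rightarrow(4)$ I would argue by contrapositive: assuming $T$ is stable and $(4)$ fails, produce a model of $T$ that admits coding. If forking is not transitive on singletons, fix $a,b,c$ and a base $D$ with $a\nind_D b$, $b\nind_D c$ and $a\ind_D c$. Using stationarity, canonical bases, and extraction of an indiscernible sequence, this local ``path'' is amplified into an infinite configuration along which a single formula $\phi$ records the forking pattern, and reindexing yields infinite sets of singletons $A,B,C$ on which $\phi$ is the graph of a bijection $A\times B\to C$. If instead total triviality fails, fix (using finite character) finite tuples $\bar a,\bar b,\bar c$ and a base $D$ with $\bar a\ind_D\bar b$, $\bar a\ind_D\bar c$ but $\bar a\nind_D\bar b\bar c$; then $\Cb(\bar a/\bar b\bar c)$ is algebraic over $\bar b\bar c$ but over neither $\bar b$ nor $\bar c$ alone, and a Morley-sequence/grid construction over this data again produces a coding configuration on singletons — the vector-space configuration $z=x+y$ on a basis together with an independent set is the prototype to keep in mind.

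The substantive direction is $(4)\Rightarrow(1)$. First I would extract the structural consequences of $(4)$: transitivity of forking on singletons makes ``$a\nind_M b$'' an equivalence relation on the non-algebraic singletons over any model $M$, and total triviality forbids forking interaction between distinct classes, so over any small submodel a model of $T$ splits into mutually independent, forking-connected pieces. The crucial point — and what makes $(4)$ exactly the right hypothesis — is that this decomposition is not destroyed by naming finitely many unary predicates. Granting this, a Ramsey-style argument in the spirit of Baldwin--Shelah finishes: given $\psi(x,\bar y)$ in a monadic expansion $(M,\bar U)$ witnessing the order property via a sequence, pass to a subsequence indiscernible over a suitable base; the decomposition together with indiscernibility of the underlying $T$-reduct forces the alleged order to be controlled by boundedly much data inside a single piece, contradicting stability of $T$.

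I expect $(4)\Rightarrow(1)$ to be the main obstacle, and within it the key lemma that the independence-block decomposition of models of $T$ survives arbitrary monadic expansion; the coding constructions in $(3)\Rightarrow(4)$ are delicate but follow the standard template, whereas turning the mere \emph{absence} of coding into full monadic stability is where the real work lies.
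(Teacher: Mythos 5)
Your cycle $(1)\Rightarrow(2)\Rightarrow(3)\Rightarrow(4)\Rightarrow(1)$ is exactly the one the paper uses, and your treatment of the two easy steps agrees with it: $(1)\Rightarrow(2)$ is immediate and $(2)\Rightarrow(3)$ follows from Fact~\ref{fact:8.1.10} since a model is a monadic expansion of itself. The difference is that the paper treats the statement as a known Fact and disposes of the two substantive implications by citation --- $(3)\Rightarrow(4)$ is Lemma 4.2.6 of \cite{BS}, and $(4)\Rightarrow(1)$ is extracted from Theorems 3.2.4 and 4.2.17 of \cite{BS}, or more cleanly Theorems 2.17 and 2.21 of \cite{TD} --- whereas you attempt to sketch self-contained proofs. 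Those sketches have genuine gaps.

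For $(4)\Rightarrow(1)$, your entire argument rests on the asserted ``key lemma'' that the independence-block decomposition of models of $T$ survives arbitrary monadic expansion; but that preservation statement \emph{is} essentially the full content of the cited theorems, not an auxiliary step, and you give no argument for it. As formulated it is also on shaky ground: monadic expansions may name arbitrarily many unary predicates (not finitely many), and one cannot invoke forking calculus or ``the decomposition'' inside the expansion before knowing the expansion is stable --- which is what is to be proved. The Baldwin--Shelah/\cite{TD} route instead analyzes types in the unary expansion via the original theory's forking (a bounded-information/decomposition analysis), and your Ramsey-plus-indiscernibility paragraph does not substitute for it. For $(3)\Rightarrow(4)$, your contrapositive is the right shape, but note that the paper's Definition~\ref{coding} requires $A,B,C\subseteq M^1$: in the failure-of-total-triviality case you start from finite tuples $\bar a,\bar b,\bar c$, and converting the canonical-base/grid picture into the graph of a bijection $A\times B\to C$ on \emph{singletons} is precisely the delicate point of \cite[Lemma 4.2.6]{BS}; ``follows the standard template'' does not discharge it. So the proposal is a reasonable strategy matching the paper's architecture, but the two hard implications remain unproved rather than reproved.
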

\begin{proof}
	$(1) \Ra (2)$ is clear, $(2) \Ra (3)$ follows from Fact \ref{fact:8.1.10}, and $(3) \Ra (4)$ is \cite[Lemma 4.2.6]{BS}. Finally, $(4) \Ra (1)$ is essentially contained in Theorems 3.2.4 and 4.2.17 of \cite{BS}, but verifying this involves tracing through several other results. The implication is more cleanly stated in Theorems 2.17 and 2.21 of \cite{TD}, noting that what \cite[Definition 2.5]{TD} calls {\em forking-triviality} is equivalent to the two conditions in $(4)$ by some basic forking-calculus manipulations.
\end{proof}

\begin{lemma} \label{nonwm}
	If $T$ is monadically stable (equivalently, stable and monadically NIP) but not monadically NFCP, then $T$ is not weakly minimal.
\end{lemma}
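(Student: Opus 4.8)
The plan is to argue by contraposition using the structural characterizations already recorded. Suppose $T$ is monadically stable (equivalently, by Fact~\ref{fact:4.2.6}, stable and monadically NIP) and, toward a contradiction, that $T$ \emph{is} weakly minimal. By the equivalence $(1)\Leftrightarrow(3)$ of Fact~\ref{MAchar}, to conclude that $T$ is monadically NFCP it then suffices to show that $T$ is forking-trivial.

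The key point is that monadic stability already forces a strong form of triviality. Indeed, by the implication $(1)\Rightarrow(4)$ of Fact~\ref{fact:4.2.6}, a monadically stable theory is totally trivial. I would then observe that total triviality implies ordinary forking-triviality (this uses only symmetry of forking, which is available since $T$ is stable): if $\set{A,B,C}$ is pairwise forking-independent over $D$, then in particular $A\ind_D B$ and $A\ind_D C$, so total triviality yields $A\ind_D BC$; symmetrically $B\ind_D AC$ and $C\ind_D AB$, so $\set{A,B,C}$ is a forking-independent set over $D$. Hence $T$ is trivial.

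Combining these, $T$ is weakly minimal (by the contradiction hypothesis) and trivial (just shown), so Fact~\ref{MAchar} gives that $T$ is monadically NFCP, contrary to assumption. Therefore a monadically stable theory that is not monadically NFCP cannot be weakly minimal. I do not anticipate a real obstacle here: the argument is essentially a bookkeeping assembly of the cited facts, and the only step needing a line of justification is the elementary forking-calculus observation that total triviality implies triviality.
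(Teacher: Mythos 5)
Your proof is correct and follows essentially the same route as the paper: invoke Fact~\ref{fact:4.2.6} to get total (hence ordinary) triviality from monadic stability, then apply Fact~\ref{MAchar} to rule out weak minimality when $T$ is not monadically NFCP. The only difference is that you spell out the easy step that total triviality implies triviality, which the paper leaves implicit.
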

\begin{proof}
	Fact \ref{fact:4.2.6} shows the parenthetical equivalence, and also shows that if $T$ is monadically stable then it is (totally) trivial. So by Fact \ref{MAchar}, if $T$ is not monadically NFCP then it cannot be weakly minimal.
\end{proof}

\section{Finding paradigms of non-monadically NFCP theories}

In this section, we show the following classical structures will always witness the failure of monadic NIP/stability/NFCP in a suitable monadic expansion.

\begin{itemize}
\item  The random graph, sometimes called the Rado graph, $\RR=(A,E)$ is the standard example of a structure whose theory has the independence property.
In particular, its theory is not monadically NIP.  

\item  Dense linear order (DLO), the theory of $(\Q,\le)$, is one of the simplest non-stable theories as $\le$ visibly witnesses the order property.  Thus, DLO is not monadically stable,
but it is monadically NIP (e.g., see \cite{Guide}*{Proposition A.2}).

\item  Let $\E=(X,E)$, where $X=\omega\times\omega$ (so each element of $X$ can be uniquely written as $(a,b)\in\omega^2$) and $E((a_1,b_1),(a_2,b_2))$ holds if and only if
$a_1=a_2$.  Thus, $\E$ is the (unique) model of the $\omega$-categorical theory of an equivalence relation with infinitely many classes, with each class infinite.  The theory $Th(\E)$ is monadically stable, but it is not monadically NFCP.  To see the former, one can check it satisfies the conditions in Fact \ref{fact:4.2.6} $(4)$. To see the latter, one can add a single unary predicate whose interpretation contains exactly $n$ elements from the $n^{{\rm th}}$ $E$-class.   This expanded structure is a paradigm of a stable structure with the finite cover property.  
\end{itemize}

We next show that these paradigms all {\em definably embed} into a monadic expansion of any model of its class.  
It is crucial to consider structures defined in $M^1$ rather than in a cartesian power, as this will allow us to name substructures in unary expansions.

\begin{definition} \label{def:de}

We say a structure $\A$ {\em definably embeds} into another structure $M$ (possibly in a different language) if $\A$ is definable on singletons in $M$. 

Explicitly, let $\A=(A,R)$ be any structure in a language with a binary relation, and let $M$ be an $L$-structure in some arbitrary language.
We say {\em $\A$ definably embeds} into $M$ if there are $L$-definable $X\subseteq M^1$ and $R'\subseteq X^2$ and a bijection $f:A\rightarrow X$ such that 
for all $a,b\in A$, $\A\models R(a,b)$ iff $M\models R'(f(a),f(b))$.  [Informally, $(X,R')$ is an `isomorphic copy of $\A$'.]
	
	A definable embedding $f:(A,R)\rightarrow (X,R')$ is {\em type-respecting} if, in addition, for any tuples $\abar,\abar'\in A^n$, if $\qftp_{\A}(\abar)=\qftp_{\A}(\abar')$, then
	$\tp_M(f(\abar))=\tp_M(f(\abar'))$.
\end{definition}

\begin{theorem} \label{paradigm}  Let $T$ be a complete $L$-theory.
\begin{enumerate}
\item  If $T$ is not monadically NIP, then the random graph $\RR$ definably embeds into some monadic expansion $M^*$ of a model $M$ of $T$.
\item  If $T$ is not monadically stable, then there is a definable, type-respecting embedding of $(\Q,\le)$
 into some monadic expansion $M^*$ of a model $M$ of $T$.
\item  If $T$ is monadically stable but not monadically NFCP, then there is a definable, type-respecting embedding of $\E$ into some monadic expansion $M^*$ of a model $M$ of $T$.
\item  If $T$ is not monadically NFCP, $\E$ definably embeds into some monadic expansion $M^*$ of a model $M$ of $T$.
\end{enumerate}
\end{theorem}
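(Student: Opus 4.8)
The plan is to treat the four items in a cascade, each reducing to (or built on) the previous. Parts (1) and (2) are the "unstable" cases and should follow quickly from the preliminaries: if $T$ is not monadically NIP, then by Fact \ref{fact:8.1.10} some monadic expansion $M^*$ of a model $M$ of $T$ admits coding, so we get infinite $A,B,C\subseteq (M^*)^1$ and a definable bijection $f\colon A\times B\to C$; the standard Baldwin--Shelah argument then lets us, after naming finitely many more unary predicates, isolate inside $(M^*)^1$ a definable copy of the random graph, since coding is exactly the hypothesis needed to interpret an arbitrary bipartite incidence structure on singletons. For part (2), if $T$ is not monadically stable then in a sufficiently saturated $M\models T$ some formula $\phi(\xbar,\ybar)$ has the order property; pick an indiscernible sequence witnessing it, and by the usual trick (name the coordinates of $\xbar$ and $\ybar$ by disjoint unary predicates, or pass to $M^{eq}$ and then pull back the imaginary sorts onto singletons using an auxiliary linear order named by unary predicates) one obtains a definable linear order of type $\mathbb{Q}$ on a definable $X\subseteq (M^*)^1$; type-respecting-ness is automatic because we started from an indiscernible sequence, so the quantifier-free type of a tuple in $(\mathbb{Q},\le)$ determines the order of the corresponding parameters, hence their type in $M^*$.

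The heart of the theorem is part (3), which is where the paradigmatic equivalence relation $\E$ must be located inside a monadically stable but not monadically NFCP $T$. By Lemma \ref{nonwm} such a $T$ is not weakly minimal, so there is a pair $M\preceq N$ and a non-algebraic $p\in S_1(M)$ with at least two non-algebraic extensions to $S_1(N)$; equivalently (this is the usual FCP-from-non-weak-minimality manipulation in a stable theory) there is a formula $\phi(x,y)$ and, for each $n$, parameters realizing the pattern that $\phi(x,\abar_i)$ is an "almost equivalence relation" whose classes can be forced to have controlled finite sizes. The plan is to use stability (finite satisfiability / definability of types) together with total triviality and transitivity of forking on singletons (Fact \ref{fact:4.2.6}(4)) to extract a genuine definable equivalence relation $E'$ on a definable $X\subseteq (M^*)^1$, in a monadic expansion $M^*$, having infinitely many infinite classes — the point being that triviality forbids the forking relation from "spreading out" so that the only way non-weak-minimality can manifest is via such an equivalence relation. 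Once $E'$ has infinitely many classes each infinite, homogeneity of $\E$ and an automorphism/back-and-forth argument (again using that we may name finitely many unary predicates to trim $X$) give a type-respecting definable embedding of $\E$, since the quantifier-free type of a tuple in $\E$ records only which pairs are $E$-equivalent, and a stable structure whose restriction to $X$ is a disjoint union of "large" $E'$-classes will be sufficiently homogeneous on $X$ for equality of these patterns to imply equality of types after the trimming.

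Finally, part (4) follows formally from (2) and (3): if $T$ is not monadically NFCP then either it is not monadically stable — in which case (2) gives a definable copy of $(\mathbb{Q},\le)$ on singletons in a monadic expansion, and a definable copy of $\E$ is easily carved out of a dense linear order (e.g. name a coloring of $\mathbb{Q}$ by $\mathbb{Z}$-many dense pieces and let $E'$ identify points in the same piece, which on a suitable definable subset realizes infinitely many infinite classes) — or $T$ is monadically stable, in which case (3) applies directly and even yields the stronger type-respecting conclusion. I expect the main obstacle to be step (3): extracting an honest equivalence relation with infinitely many infinite classes from mere failure of weak minimality requires carefully combining the FCP-style configuration with the structural constraints that triviality and transitivity of forking impose on forking among singletons, and checking that the resulting $E'$ can be made definable (after a monadic expansion) with all classes infinite rather than merely unboundedly large; the type-respecting refinement adds a further homogeneity bookkeeping burden.
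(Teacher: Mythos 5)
Your outline for (1) matches the paper's route (coding plus finitely many extra unary predicates), but the core of the theorem, part (3), is not actually proved in your proposal, and the idea you sketch for it is not the one that works. You propose to ``extract a genuine definable equivalence relation $E'$'' from the failure of weak minimality by arguing that total triviality and transitivity of forking force non-weak-minimality to ``manifest'' as such a relation; no mechanism is given for producing $E'$, and you yourself flag this step as the expected obstacle. The paper does something concrete and different: from non-weak-minimality it first gets (Fact \ref{wm}) a model $M$ and \emph{singletons} $a,b$ with $\tp(a/Mb)$ non-algebraic but forking over $M$, witnessed by a formula $\phi(x,y)$ over $M$. It then builds a Morley sequence $B=\{b_n\}$ in $\tp(b/M)$ and, recursively, mutually independent Morley sequences $I_n$ of the conjugate strong types $q_{b_n}$, names $A=\bigcup_n I_n$, $B$, and the parameters of $\phi$ by unary predicates, and defines $E(a,a')$ iff $\exists b\in B\,(\phi(a,b)\wedge\phi(a',b))$. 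The equivalence relation is thus \emph{manufactured} by the monadic expansion (its classes are exactly the $I_n$), not extracted intrinsically from the forking calculus; and the type-respecting property comes from the explicit indiscernibility of each $I_n$ over everything else and the fact that permutations of $B$ extend to elementary maps --- none of which is available in your proposal, where the back-and-forth/homogeneity argument has no concrete configuration to act on. As written, part (3) is a statement of intent rather than a proof.

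Two secondary problems. In (2), the real input is that instability can be witnessed by a formula $\phi(x,y)$ with \emph{both} variables singletons after naming parameters (the paper cites this); your substitutes --- naming the coordinates of $\xbar,\ybar$ by unary predicates, or ``pulling back'' imaginary sorts onto singletons via unary predicates --- do not obviously yield an order on singletons, since unary predicates cannot turn tuples or imaginaries into elements. In (4), your construction of $\E$ inside a monadic expansion of $(\Q,\le)$ fails: colouring $\Q$ by $\Z$-many dense pieces requires infinitely many unary predicates, and ``same piece'' is then an infinite disjunction, not definable by a single formula. The paper instead names the single predicate $A=\Q\setminus\Z$ and sets $E(a,a')$ iff every point between $a$ and $a'$ lies in $A$, so the classes are the intervals $(n,n+1)$ --- a convexity trick your dense colouring cannot replicate.
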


\begin{proof}  
 (1) Assume $T$ is not monadically NIP.  By either \cite{BS} or \cite{BLmonNIP}, there is a monadic expansion $M^*$ of a model of $T$ that admits coding, i.e., there are
 infinite sets $A,B,C$ and a 3-ary $L^*$-formula $\phi(x,y,z)$ coding the graph of a bijection from $A\times B$ to $C$.   By adding more unary predicates, we may assume each of 
 $A,B,C$ are definable in $M^*$ and are countably infinite, and by replacing $\phi$ by $\phi(x,y,z)\wedge A(x)\wedge B(y)\wedge C(z)$, the graph of $\phi$ is precisely the bijection.
 Now add a unary predicate $D\subseteq C$ so that for every $a_1\neq a_2 \in A$, there is a unique $b \in B$ such that $M^* \models \exists (d_1, d_2 \in D) (\phi(a_1,b,d_1) \wedge \phi(a_2,b,d_2))$.
 Thus, in this expansion, one can think of $B$ as coding (symmetric) edges of $A$ via this formula.  For the whole of $D$, we get a complete graph on $A$, but for any predetermined
 graph $\G$ with universe $A$, one can add a single unary predicate $E\subseteq D$ so that for any $a_1\neq a_2\in A$, the following formula holds iff $a_1,a_2$ are edge-related in $\G$.
\[\exists y\exists z_1 \exists z_2 (E(z_1) \wedge E(z_2) \wedge \phi(a_1,y,z_1)\wedge\phi(a_2,y,z_2) \]
In particular, we get a definable embedding of $\RR$ into this expansion of $M^*$.

(2)  By passing to a monadic expansion, we may assume $T$ itself is unstable.  [In fact, any monadically NIP, non-monadically stable theory must itself be unstable,
but we don't need this.]  By \cite{Pierre}, after adding parameters, there is a formula
$\phi(x,y)$ with the order property, where $x$ and $y$ are both singletons. Thus, by adding an additional unary predicate for each of the parameters $c$ (with interpretation $\{c\}$)
there is a monadic expansion $M^*$ of a model of $T$ with a 0-definable $L^*$-formula $\psi(x,y)$ with the order property.

By Ramsey and compactness and by passing to an $L^*$-elementary extension, we may assume there are order-indiscernible subsets
$A=\{a_i:i\in\Q\}$ and $B=\{b_j:j\in\Q\}$ of $M^*$ such that
$M^*\models \psi(a_i,b_j)$ iff $i\le j$. 
By replacing $M^*$ by a monadic expansion of itself, we may additionally assume there are predicates for $A$ and $B$.
  But now, the ordering $a_i \le' a_j$ is definable on $A$ via the 0-definable $L^*$-formula
$(\forall b\in B)[\psi(a_j,b)\rightarrow\psi(a_i,b)]$.  Then $(A,\le')$ witnesses that there is a type-respecting, definable embedding of $(\Q,\le)$  into $M^*$.  

(3)   By Lemma \ref{nonwm}, $T$ is not weakly minimal, so the following will suffice.

\begin{fact} \label{wm}  If $T$ is stable but not weakly minimal, then, working in a large, saturated model $\CC$ of $T$,
 there is a model $M\preceq \CC$ and singletons $a,b$ such that $\tp(a/Mb)$ is not algebraic, but forks over $M$.
\end{fact}

\begin{proof}  As $T$ is not weakly minimal, there are $M_0\preceq N$ and $p\in S_1(M_0)$ that has two non-algebraic extensions to $S_1(N)$.  As $p$ is stationary,
this implies there is a non-algebraic $q\in S_1(N)$ that forks over $M_0$.  Let $a$ be any realization of $q$, and choose $Y$ to be maximal such that $M_0\subseteq Y\subseteq N$
and $\fg a {M_0} Y$.  As $\tp(a/N)$ forks over $M_0$, $Y\neq N$, so choose any singleton $b\in N\setminus Y$.  By the maximality of $Y$, $\nfg a Y b$.
To complete the proof, choose a model $M\supseteq Y$ with $\fg M Y {ab}$.  It follows by symmetry and transitivity of non-forking that $\nfg a M b$.
Also, since $\tp(a/N)$ is non-algebraic, so is $\tp(a/Yb)$.  But, as $\tp(a/M)$ does not fork over $Yb$, $\tp(a/M)$ is non-algebraic as well.
\end{proof}

Fix $a,b,M$ as in Fact~\ref{wm} and choose an formula $\phi(x,y)\in \tp(ab/M)$ (with parameters from $M$) that witnesses the forking over $M$.

Let $r=\tp(b/M)$ and choose a Morley sequence  $B=\{b_n:n\in\omega\}$  in $r$.  Let $q=\stp(a/Mb)$ and, for each $n$, let $q_{b_n}$ be the strong type over $Mb_n$ conjugate to $q$.
Recursively construct sets $\{I_n:n\in\omega\}$ where each $I_n=\{a_{n,m}:m\in\omega\}$ is a Morley sequence of realizations of the non-forking extension $q^*_{b_n}$ of $q_{b_n}$
to $M\cup B\cup \bigcup\{I_k:k<n\}$. 
It follows by symmetry and transitivity of non-forking that each $I_n$ is independent and fully indiscernible over $MB\cup\bigcup\{I_k:k\neq n\}$. 

Let $A=\{a_{n,m}:n,m\in\omega\}$.  
Now,
any permutation $\sigma\in Sym(B)$ is $L_M$-elementary, and in fact, induces an $L_M$-elementary permutation $\sigma^*\in Sym(AB)$.
Let $L^*=L\cup\{A,B,C_1,\dots,C_n\}$ and let $\CC^*$ be the natural monadic expansion of $\CC$ formed by interpreting
$A$ and $B$ as above, and interpreting each  $C_i$ as $\{c_i\}$, where $\{c_1,\dots,c_n\}$ are the parameters occurring in $\phi$. [We silently replace $\phi(x,y)$
by the natural 0-definable $L^*$-formula formed by replacing each $c_i$ by $C_i$.]  
Finally, define an $L^*$-definable binary relation $E$ on $A^2$ by:
$$E(a,a')\quad\Longleftrightarrow\quad (\exists b\in B)[\phi(a,b)\wedge\phi(a',b)]$$
It is easily checked that $E$ is an equivalence relation, whose classes are precisely $\{I_n:n\in\omega\}$.
Thus, $(A,E)$ is the image of a type-respecting, definable embedding of $\E$ into $\CC^*$.

(4)  We prove this by cases.  If $T$ is not monadically NIP, then $\RR$ definably embeds in a unary expansion, and expanding by a further unary predicate naming infinitely many infinite cliques with no edges between them definably embeds $\E$, so assume $T$ is monadically NIP.  If $T$ is also monadically stable, we are done by (3), so assume $T$ is not monadically stable.  Then, by (2),
 there is a type-respecting, definable embedding of $(\Q,\le)$ into some monadic expansion $M^*$ of a model of $T$.  Thus, it suffices to prove that $\E$ definably embeds into some monadic expansion of 
$(\Q,\le)$.  But this is easy. 
Let $A = \Q \bs \Z$. Then $A$ is 0-definable in the monadic expansion $(\Q,\le, A)$, as is the relation $E\subseteq A^2$ given by
$$E(a,a')\quad\Longleftrightarrow\quad \forall x([a<x<a' \vee a'<x<a]\rightarrow A(x))$$
It is easily checked that $(A,E)$ is isomorphic to $\E$.
\end{proof}

We close this section by stating one `improvement' of Theorem~\ref{paradigm}(4) that will be used in Section~\ref{expand}.
Whereas Theorem~\ref{paradigm} speaks about a definable embedding of $\E$ into some monadic expansion of some model of $T$, we isolate
the following corollary, which describes a weaker configuration that can be found in arbitrary models of $T$ in the original language.

\begin{corollary}  \label{config} Suppose $T$ is a complete $L$-theory that is monadically NIP, but not monadically NFCP.
Then there is an $L$-formula $\phi(x,y,\zbar)$ such that, for every model $N$ of $T$ and every $n\ge 1$, there is $\dbar_n$ and disjoint sets $B_n=\{b_i^n: i < n\}$, 
$A_n=\{a^n_{i,j}:i,j<n\}$ that are without repetition such that
\begin{enumerate}
\item  The sets $\{A_n,B_m:n,m\in\omega\}$ are pairwise disjoint;
\item  For all $n$, all $i,j,k<n$, one of the following holds.
\begin{enumerate}
\item $T$ is stable and 
$N\models\phi(b^n_k,a^n_{i,j})$ if and only if $k=i$; 
\item $T$ is unstable and $N\models\phi(b^n_k,a^n_{i,j})$ if and only if $k\le i$.
\end{enumerate}
\end{enumerate}
Moreover, we may additionally assume that the set $X=N\setminus \bigcup_{n\ge 1}(A_n\cup B_n)$ is infinite.
\end{corollary}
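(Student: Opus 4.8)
The plan is to deduce Corollary~\ref{config} from Theorem~\ref{paradigm}(3) by ``compiling away'' the monadic expansion into explicit parameters. Since $T$ is monadically NIP but not monadically NFCP, it is in particular monadically stable by Fact~\ref{fact:4.2.6}? No---wait, $T$ need not be stable here. I should split at the outset into the stable and unstable cases, matching the dichotomy in clause (2) of the corollary. If $T$ is stable, then monadic NIP gives monadic stability (Fact~\ref{fact:4.2.6}), so Theorem~\ref{paradigm}(3) applies and yields a type-respecting definable embedding of $\E$ into a monadic expansion $\CC^*$ of a saturated $\CC\models T$. If $T$ is unstable, then it is not monadically stable, so Theorem~\ref{paradigm}(2) applies and yields a type-respecting definable embedding of $(\Q,\le)$ into a monadic expansion $\CC^*$; inside $(\Q,\le)$ one finds, for each $n$, the configuration $b^n_k,a^n_{i,j}$ with $\phi$ meaning $\le$ by taking $b^n_0<b^n_1<\dots$ and placing $a^n_{i,j}$ just above $b^n_i$, which gives clause (2)(b).

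The key move is to replace the monadic expansion by first-order data. In $\CC^*$ we have the formula $E$ (or $\le'$) defining a copy of $\E$ (or $(\Q,\le)$) on a definable $X\subseteq\CC^1$. Unwinding the proof of Theorem~\ref{paradigm}(3), $E(a,a')$ is $(\exists b\in B)[\phi(a,b)\wedge\phi(a',b)]$ where $\phi(x,y)$ has finitely many parameters $\cbar$ from the base model $M$, and $A,B$ are the new unary predicates. The point is that inside the $A$-part we can extract, for each finite $n$, a finite sub-configuration: pick $b^n_0,\dots,b^n_{n-1}\in B$ distinct, and for each $i<n$ let $a^n_{i,0},\dots,a^n_{i,n-1}$ be $n$ elements of the $E$-class $I_i$ corresponding to $b^n_i$; then $\phi(b^n_k,a^n_{i,j})$ holds iff $k=i$ (using that $\phi$ witnesses that each $I_i$ is exactly the fiber over $b_i$, which is how $E$ was built---strictly, $E(a,a')$ iff $\exists b\,\phi(a,b)\wedge\phi(a',b)$, and distinct $b_i$'s give distinct fibers, so $\phi(b^n_k, a^n_{i,j})$ holds for $k=i$ and, after shrinking, fails for $k\ne i$). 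Crucially, $b^n_k$ and $a^n_{i,j}$ are elements of the original model $\CC$, and $\phi(x,y,\cbar)$ is an $L$-formula: the predicates $A,B,C_i$ were only used to \emph{locate} these elements, not to define the relation between them. So the data $(\phi(x,y,\zbar),\dbar_n=\cbar, A_n, B_n)$ lives purely in $\CC$ in the language $L$.

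Next I would pass from the saturated $\CC$ to an arbitrary model $N\models T$. The existence of tuples realizing the required finite configuration is, for each fixed $n$, a first-order property: there is an $L$-sentence $\sigma_n$ asserting ``there exist $\dbar, (b_i)_{i<n}, (a_{i,j})_{i,j<n}$, all distinct and with $A$-part disjoint from $B$-part, such that $\phi(b_k,a_{i,j},\dbar)$ holds iff $k=i$ (resp. $k\le i$)''. Since $\CC\models\sigma_n$ and $T$ is complete, $\sigma_n\in T$, hence $N\models\sigma_n$ for every model $N$. This gives, in each $N$, a single block $A_n,B_n$ for each $n$; the pairwise-disjointness of the family $\{A_n,B_m\}$ and the infiniteness of the leftover set $X=N\setminus\bigcup(A_n\cup B_n)$ are then arranged by a standard compactness/saturation argument applied in a monster model and then reflected down---or more cleanly, by noting that one can write down, for each $n$, a sentence asserting the existence of $n$ \emph{mutually disjoint} blocks $A_1,\dots,A_n,B_1,\dots,B_n$ \emph{plus} $n$ further ``spare'' elements outside all of them (there is no harm here since each finite configuration only constrains finitely many elements, and $T$ having an infinite model means these sentences are all in $T$), and then use compactness to get the full $\omega$-indexed disjoint family together with infinite $X$ in a sufficiently saturated model; finally, any model $N$ of $T$ satisfies each of these finitary sentences, giving the claimed configuration in $N$ itself.

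The main obstacle I anticipate is the bookkeeping in the previous paragraph: getting \emph{all} of the conditions (1), (2), and the moreover clause to hold \emph{simultaneously in every model $N$}, not just in a saturated one. The subtlety is that ``pairwise disjoint for all $n,m\in\omega$'' and ``$X$ infinite'' are not single first-order sentences. The resolution is that they \emph{are} each expressible as a \emph{scheme} of first-order sentences (finite disjointness + finitely many spare elements), each member of which holds in $\CC$, hence lies in $T$, hence holds in every $N$; and because only finitely many of the witnessing elements interact in any one sentence, one can choose the witnesses in $N$ greedily/recursively so that the blocks $A_n,B_n$ chosen at stage $n$ avoid the finitely many elements chosen before and also leave room for the spare elements---this recursion never terminates because at each finite stage the relevant ``there exist fresh disjoint witnesses avoiding a given finite set'' sentence is in $T$. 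I would present this recursion explicitly but briefly, since it is the only genuinely non-formal point; everything else is unwinding Theorem~\ref{paradigm} and completeness of $T$.
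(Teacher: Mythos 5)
Your overall route is the same as the paper's: split on whether $T$ is stable, extract the array from an order-property formula with singleton variables (unstable case) or from the forking configuration behind Theorem~\ref{paradigm}(3) (stable case), transfer each finite configuration to an arbitrary model $N$ by completeness, and then build the pairwise disjoint family recursively. The genuine gap is at the recursive step. You assert that the sentences ``there exist $n$ mutually disjoint blocks plus $n$ spare elements'' (equivalently, ``for any $r$ prescribed elements there is a size-$n$ configuration avoiding them'') lie in $T$ because ``each finite configuration only constrains finitely many elements'' and $T$ has an infinite model. That reason does not establish the claim: the existence of one size-$n$ configuration in an infinite model does not by itself produce a configuration avoiding a prescribed finite set, and this is precisely where condition (1) and the Moreover clause require an argument. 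The paper supplies it with a pigeonhole/restriction observation: if $\{b_i: i<m\}$, $\{a_{i,j}: i,j<m\}$ work for $m$, then for any $s\subseteq m$ the restriction to indices in $s$ (reindexed order-preservingly, which matters in the $k\le i$ case) works for $|s|$; since each forbidden element occurs as at most one $b_i$ and at most one $a_{i,j}$, it contaminates at most two indices, so taking $m\ge n+2r$ yields a size-$n$ configuration disjoint from any $r$-element set. Alternatively you could verify each avoidance sentence directly in the saturated model using the infinite array there, but some such argument must be given; once it is, your greedy recursion and spare-element bookkeeping do deliver (1) and the Moreover clause.

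A smaller point: in the unstable case you describe the configuration ``inside $(\Q,\le)$ with $\phi$ meaning $\le$''. The ordering $\le'$ on the embedded copy is defined using the unary predicate $B$, so it is not an $L$-formula and cannot serve as the $\phi(x,y,\zbar)$ of the corollary. As your later paragraph suggests, you must instead use the original order-property $L$-formula $\phi(x,y,\zbar)$ itself, choosing $b^n_k$ and $a^n_{i,j}$ from the two order-indiscernible sides with indices interleaved (e.g.\ $a^n_{i,j}$ indexed just above $i$) so that $\phi(b^n_k,a^n_{i,j},\dbar_n)$ holds iff $k\le i$; with that correction the unstable case matches the paper's argument.
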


\begin{proof}  As in the proof of Theorem~\ref{paradigm}(2),(3), we split into cases depending on whether or not $T$ is stable.
If $T$ is unstable, as in the proof of Theorem~\ref{paradigm}(2),  choose an $L$-formula $\phi(x,y,\zbar)$ witnessing the order property in large, sufficiently saturated models of $T$.
Now, choose any $N\models T$.  As there is some sufficiently saturated $N'\succeq N$ in which $\phi(x,y,\dbar)$ codes the order property, it follows from elementarity that,
for any fixed $n$, there are $\dbar_n\in N^{\lg(\zbar)}$ and disjoint sets $\{b_i:i<n\}$ and $\{a_{i,j}:i,j<n\}$ such that for all $k,i,j<n$, $N\models \phi(b_k,a_{i,j},\dbar_n)$
if and only if $k\le i$.  

To get the pairwise disjointness, note that if $\{b_i:i<n\}$, $\{a_{i,j}:i,j<n\}$ work for $n$, then for any subset $s\subseteq n$, the subsets $\{b_i:i\in s\}$, $\{a_{i,j}:i,j\in s\}$ work for
$n'=|s|$.
Thus, given any fixed finite set $F$ to avoid, given any $n$, by choosing $m\ge n$ large enough and choosing an appropriate $s\subseteq m$, we can find
disjoint sets $\{b_i:i<n\}$ and $\{a_{i,j}:i,j<n\}$, each of which are disjoint from $F$.

Using this, we can recursively define sequences $\dbar_n$ and
pairwise disjoint families $B_n=\{b_i^n:i<n\}$ and $A_n=\{a^n_{i,j}:i,j<n\}$ such that for all $k,i,j<n$, $N\models \phi(b_k,a_{i,j},\dbar_n)$
if and only if $k\le i$.  By passing to an infinite subsequence, using the remarks above,  and reindexing
we can shrink any family $\{B_n,A_n:n\in\omega\}$ to one satisfying the Moreover clause.

If $T$ is stable, then $T$ is not weakly minimal by Lemma \ref{nonwm}.
Thus, as in the proof of  Theorem~\ref{paradigm}(3),  there is a sufficiently saturated elementary extension $N'\succeq N$ and a formula $\phi(x,y,\zbar)$ that witnesses forking, such that in $N'$ there are $\{b_i:i\in \omega\}$, $\{a_{i,j}:i,j\in\omega\}$, and $\dbar$ such that for all $i,j,k\in \Z$, $N'\models\phi(b_k,a_{i,j},\dbar)$ if and only if $k=i$.

Now, using this configuration, the methods used in the unstable case apply here as well.
\end{proof}

\section{Sets definable in purely monadic and monadically NFCP structures}

 Fact~\ref{MAchar} asserts that a theory is monadically NFCP if and only if it is mutually algebraic, so we recall what is known about sets definable in a mutually algebraic structure.
Throughout this section, fix an infinite cardinal $\lambda$ and think of the set $\lambda=\{\alpha:\alpha\in\lambda\}$ as being the universe of a structure.

\begin{definition}  \label{MA} Fix any infinite cardinal $\lambda$ and any integer $k\ge 1$. 
\begin{itemize}
\item A subset $Y\subseteq \lambda^k$ is {\em mutually algebraic} if there is some integer $m$ so that for every $a\in \lambda$,
$\{\abar\in Y:a\in\abar\}$ has size at most $m$.  
\item  A subset $Y^*\subseteq \lambda^{k+\ell}$ is {\em padded mutually algebraic} if, for some permutation $\sigma\in Sym(k+\ell)$ of the coordinates,
 there is a mutually algebraic $Y\subseteq \lambda^k$ and $Y^*=\sigma(Y\times \lambda^\ell)$.
 \item  A model $M$ with universe $\lambda$ is {\em mutually algebraic} if, for every $n$, every definable (with parameters) $D\subseteq \lambda^n$ is a boolean combination of definable (with parameters) padded mutually algebraic sets.
 \item  A complete theory $T$ is {\em mutually algebraic} if some (equivalently, all) models of $T$ are mutually algebraic.
 \end{itemize}
 \end{definition}
 
 Trivially, every unary subset $Y\subseteq \lambda^1$ is mutually algebraic.  
 
 \begin{fact}[\cite{LTArray}*{Theorem 2.1}] \label{MAcrit}  An $L$-structure $M$ is mutually algebraic if and only if every
 atomic $L$-formula $\alpha(x_1,\dots,x_n)$ is equivalent to a boolean combination of quantifier-free definable (with parameters) padded mutually algebraic sets.
 \end{fact}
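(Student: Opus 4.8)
The statement splits into two implications. The forward direction --- if $M$ is mutually algebraic then every atomic $L$-formula is a Boolean combination of quantifier-free definable padded mutually algebraic sets --- is the routine one: by Definition~\ref{MA} every definable, in particular every atomic, $L$-formula defines a Boolean combination of definable padded mutually algebraic sets, and the only additional point is that for atomic formulas the pieces can be taken quantifier-free definable, because the atomic relations of $M$ are themselves available as quantifier-free building blocks, so their mutually algebraic decompositions are visible quantifier-freely; here I would simply invoke the computation in \cite{LTArray}. The plan therefore concentrates on the converse.

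So suppose every atomic $L$-formula is a Boolean combination of quantifier-free definable (with parameters) padded mutually algebraic sets; the goal is to show $M$ is mutually algebraic. Let $\mathcal F$ be the class of all finitary relations on $\lambda$ that are Boolean combinations of \emph{definable} (with parameters) padded mutually algebraic sets. By construction $\mathcal F$ is closed under Boolean operations, and it is closed under permuting coordinates and adjoining dummy coordinates since padded mutual algebraicity is preserved by both. By hypothesis the solution set of every atomic $L$-formula lies in $\mathcal F$. Hence, inducting on the complexity of $L$-formulas, in order to conclude that every definable $D\subseteq\lambda^n$ lies in $\mathcal F$ --- which is precisely the assertion that $M$ is mutually algebraic --- it suffices to prove that $\mathcal F$ is closed under projection, i.e.\ under eliminating one existential quantifier. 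This projection step is essentially the whole content.

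For the projection step, put the given member of $\mathcal F$ into disjunctive normal form over padded mutually algebraic predicates; since projection commutes with finite unions it is enough to project a single conjunctive cell $\exists y\,\bigl(\bigwedge_i P_i(\bar x,y)\wedge\bigwedge_j\neg N_j(\bar x,y)\bigr)$ with each $P_i,N_j$ padded mutually algebraic. In each conjunct the bound variable $y$ is either one of the ``$\lambda^\ell$'' padding coordinates --- so that conjunct does not depend on $y$ and is pulled outside $\exists y$ --- or $y$ is a coordinate of the underlying mutually algebraic set (an \emph{active} occurrence). After extracting the $y$-independent conjuncts we face $\exists y$ of a cell in which $y$ is active in every remaining conjunct. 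If every active underlying mutually algebraic set has arity $1$, each such conjunct defines a fixed unary set independent of $\bar x$, the matrix is $\bar x$-independent, and the projection is a truth value, trivially in $\mathcal F$. Otherwise some active underlying set has arity $\ge 2$, and its mutual algebraicity yields a uniform finite bound $m$ on $\abs{\{y:\bigwedge_i P_i(\bar x,y)\}}$ valid for all $\bar x$. Two facts then finish the cell. First, for any pure conjunction $\sigma(\bar x,y)$ of active padded mutually algebraic predicates that includes an arity-$\ge 2$ one, and any $r$, the relation $\exists y_1\cdots y_r\bigl(\bigwedge_t\sigma(\bar x,y_t)\wedge\bigwedge_{t<t'}y_t\neq y_{t'}\bigr)$ is again padded mutually algebraic: one propagates the algebraicity bound from the arity-$\ge 2$ conjunct through the remaining conjuncts to see that, before projecting, the relation in $(\bar x,y_1,\dots,y_r)$ is mutually algebraic (the coordinates of $\bar x$ not touched by an arity-$\ge 2$ conjunct simply pad). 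Second, the cell holds iff $\abs{\{y:\bigwedge_i P_i(\bar x,y)\}}>\abs{\{y:\bigwedge_i P_i(\bar x,y)\wedge\bigvee_j N_j(\bar x,y)\}}$; by inclusion--exclusion both sides are fixed integer combinations of counts $\abs{\{y:\tau(\bar x,y)\}}$ where $\tau$ ranges over the pure subconjunctions $\bigwedge_i P_i\wedge\bigwedge_{j\in S}N_j$, and each such count lies in $\{0,\dots,m\}$; so the cell is a finite union, over the finitely many achievable count-tuples, of finite Boolean combinations of the sets ``the count of $\tau$ is $\ge c$'', each of which lies in $\mathcal F$ by the first fact.

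The main obstacle is this projection step, and inside it the first fact: the combinatorial argument that a projection of a pure conjunction of padded mutually algebraic predicates is again padded mutually algebraic, which requires carefully tracking which coordinates of $\bar x$ are ``covered'' by an active arity-$\ge 2$ conjunct (the uncovered ones pad) and propagating the uniform bound through the conjunction. One must also remember to separate off the degenerate case in which every active underlying set is unary --- there no bound on the $y$-fiber is available, but the matrix is $\bar x$-independent so the projection is a truth value. Given the first fact, the inclusion--exclusion reduction is routine bookkeeping, and the outer induction on formulas then yields that $M$ is mutually algebraic.
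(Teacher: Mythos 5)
First, a point of comparison: the paper does not prove Fact~\ref{MAcrit} at all --- it is imported verbatim from \cite{LTArray}*{Theorem 2.1} --- so there is no internal proof to measure you against; your attempt has to stand on its own. Your plan for the converse (induct on formula complexity, so that everything reduces to showing the class of Boolean combinations of padded mutually algebraic sets is closed under a single existential quantifier, handled cell-by-cell in DNF with counting and inclusion--exclusion) is the right shape and close in spirit to how the cited sources argue. But there is a concrete gap in the case analysis of the projection step. After discarding the conjuncts in which $y$ is a padding coordinate, you split into ``every active underlying set is unary'' versus ``some active underlying set has arity $\ge 2$'', and in the second case you assert that mutual algebraicity gives a uniform bound $m$ on $|\{y:\bigwedge_i P_i(\bar x,y)\}|$. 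That is true only if an arity-$\ge 2$ active conjunct occurs \emph{positively}. If the only arity-$\ge 2$ active conjuncts are among the negated ones --- e.g.\ the cell $\exists y\,(U(y)\wedge\neg N(\bar x,y))$ with $U$ an infinite unary set and $N$ of arity $\ge 2$ --- then the positive fiber is a fixed infinite set, no bound $m$ exists, and your cardinality-comparison criterion together with inclusion--exclusion is unsound (the counts are not in $\{0,\dots,m\}$, and comparing infinite cardinalities does not detect whether the difference set is nonempty). This case is repairable: absorb the unary negations into the fixed positive set $U'$; if $U'$ is infinite the cell is identically true, since the fibers of the arity-$\ge 2$ negated conjuncts are uniformly finite; if $U'$ is finite, substitute its elements as parameters, using that fixing a coordinate of a padded mutually algebraic set yields a padded mutually algebraic set. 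But as written the argument fails on exactly this class of cells, and the two-way case split must be made three-way.

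A second, lesser issue: the forward direction is not ``routine'' and you do not actually prove it --- you appeal to ``the computation in \cite{LTArray}'', which is the very theorem at issue. Definition~\ref{MA} only gives that an atomic relation is a Boolean combination of \emph{definable} padded mutually algebraic sets; upgrading the pieces to \emph{quantifier-free} definable ones is a substantive part of \cite{LTArray}*{Theorem 2.1} (the point there is that any set which is a Boolean combination of padded mutually algebraic sets at all is one whose pieces are quantifier-free definable, with parameters, from the set itself). Saying that the atomic relations are quantifier-free building blocks does not by itself make their mutually algebraic constituents quantifier-free definable, so this direction is currently a citation rather than an argument.
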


 It follows immediately that any purely monadic structure is mutually algebraic.

In this section, our goal is to obtain a particular configuration, described in Lemma \ref{monthree}, appearing in any mutually algebraic structure whose theory is not purely monadic. This will be used in the proof of Theorem \ref{big}, when a non-monadically definable $Y$ induces a mutually algebraic structure.
 
We begin by characterizing which mutually algebraic sets $Y\subseteq \lambda^k$ are monadically definable.  
Obviously, every $Y\subseteq\lambda^1$ is monadically definable, so we concentrate on $k\ge 2$.
As notation, let $\Delta_k=\{(a,a,\dots,a)\in\lambda^k:a\in\lambda\}$ denote the set of constant $k$-tuples.

\begin{lemma}  \label{monone}   Fix any infinite cardinal $\lambda$ and any integer $k\ge 2$.  A mutually algebraic subset $Y\subseteq \lambda^k$ is 
monadically definable if and only if $Y\setminus \Delta_k$ is finite.
\end{lemma}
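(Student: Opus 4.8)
The statement is an equivalence, so I would prove the two directions separately. The easy direction is ($\Leftarrow$): if $Y \setminus \Delta_k$ is finite, then $Y$ is a finite union of the diagonal $\Delta_k$ intersected with the appropriate unary ``slices'' together with finitely many individual constant tuples, and all of these are visibly definable in a monadic structure. Concretely, $\Delta_k \cap (Z \times \cdots \times Z)$ for $Z \subseteq \lambda^1$ is definable in $(\lambda, Z)$ using only the (logical) equality relation, and adding finitely many unary singletons handles the finitely many tuples of $Y \setminus \Delta_k$. Since a finite union of monadically definable sets is monadically definable (throw all the unary predicates together), this direction is routine.

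The substantive direction is ($\Rightarrow$): assume $Y$ is monadically definable, say $Y$ is definable in $(\lambda, U_1, \dots, U_n)$, and derive that $Y \setminus \Delta_k$ is finite. Since a monadic structure is in particular mutually algebraic (by the comment after Fact~\ref{MAcrit}, or directly), I would first invoke Fact~\ref{MAcrit}: the atomic formula $Y(x_1, \dots, x_k)$ — now viewed inside the monadic structure $(\lambda, U_1, \dots, U_n)$, where it is a quantifier-free definable set — is a boolean combination of quantifier-free padded mutually algebraic sets. But over a pure monadic language the only quantifier-free definable sets are boolean combinations of ``coordinate in $U_i$'' conditions and equalities $x_a = x_b$. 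A padded mutually algebraic set that does not force any equality among its coordinates must itself have all coordinates ranging over unary-definable sets padded out — i.e., it is (up to coordinate permutation) $W \times \lambda^{k-1}$ for some $W \subseteq \lambda^1$, unless it is not actually ``mutually algebraic'' unless it forces equalities. The key point: a mutually algebraic set $Y \subseteq \lambda^k$ in which, on $\lambda \setminus (\text{finite})$, no two coordinates are forced equal, would contain an infinite ``grid,'' contradicting the uniform-finiteness bound $m$ in Definition~\ref{MA}. So I would argue that each padded mutually algebraic piece appearing in the boolean combination that is not contained in a finite union of diagonals must be padded, hence have infinitely many tuples sharing a fixed first coordinate, contradicting mutual algebraicity of $Y$ unless that piece contributes nothing outside a finite set — forcing $Y \setminus \Delta_k$ finite.

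Let me restate the core of ($\Rightarrow$) more carefully, since this is where the real work lies. Write $Y = \bigcup_{i} P_i \setminus (\text{stuff})$ as a boolean combination of quantifier-free padded mutually algebraic sets in the monadic language; each $P_i$ is, up to a coordinate permutation $\sigma_i$, of the form $Z_i \times \lambda^{\ell_i}$ with $Z_i \subseteq \lambda^{k - \ell_i}$ mutually algebraic and quantifier-free definable in the monadic language. Over the pure monadic language, quantifier-free definable mutually algebraic $Z_i \subseteq \lambda^{j}$ with $j \geq 2$: the only atomic relations are $U_p$ and $=$, so $Z_i$ is a finite union of sets cut out by fixing which $U_p$'s each coordinate lies in and which coordinates coincide; the mutual-algebraicity bound $m$ then forces that, outside finitely many elements, every such ``cell'' that contributes collapses to a diagonal condition $x_a = x_b$ — otherwise fixing one coordinate to a generic value leaves infinitely many completions. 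Propagating this through the boolean combination, $Y$ agrees with a boolean combination of diagonal conditions $x_a = x_b$ off a finite set, and the only such boolean combination that is itself mutually algebraic and $k$-ary must lie within $\Delta_k$ off a finite set (any weaker equality pattern, e.g.\ only $x_1 = x_2$ with $x_3, \dots, x_k$ free, again violates the uniform bound). Hence $Y \setminus \Delta_k$ is finite. The main obstacle is making the bookkeeping of ``boolean combination of padded mutually algebraic sets in a monadic language'' precise enough to extract the diagonal dichotomy cleanly; I expect this can be done by a direct counting argument using the single integer $m$ from the mutual-algebraicity of $Y$ itself, applied to a generic choice of the fixed coordinate.
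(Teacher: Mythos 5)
Your proposal is correct in substance, and your easy direction is the same as the paper's (singletons for the finitely many off-diagonal tuples plus one unary predicate for the diagonal trace). For the hard direction you take a genuinely different, more syntactic route: you decompose $Y$, via quantifier elimination in the monadic structure, into cells determined by color conditions, parameter equalities, and an equality pattern among the variables, and then count against the uniform bound $m$ of Definition~\ref{MA}: any nonempty cell with at least two equality-blocks, one of which ranges over an infinite color class, yields infinitely many $Y$-tuples through a fixed element, so every cell is either contained in $\Delta_k$ or finite, whence $Y\setminus\Delta_k$ is finite. The paper never decomposes $Y$: it uses quantifier elimination only to see that in a monadic structure $\acl_N(a)=F\cup\{a\}$ for a fixed finite $F$, and then notes that mutual algebraicity of the definable set $Y$ puts every coordinate of a tuple of $Y$ inside the algebraic closure of any other coordinate, giving $Y\subseteq F^k\cup\Delta_k$ in two lines. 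Your cell analysis is more elementary and explicit but needs exactly the bookkeeping you anticipate; the acl argument is shorter and avoids it. One caveat: Fact~\ref{MAcrit} is not the right tool for your decomposition --- applied to the expansion naming $Y$, it only gives a boolean combination of padded mutually algebraic sets quantifier-free definable in the \emph{expanded} language, so the pieces may mention $Y$ itself. What your argument actually needs is the easy quantifier elimination (with parameters) for purely monadic structures, which you tacitly assert when you call $Y$ quantifier-free definable; state and use that directly, as the paper does, and the rest of your plan goes through.
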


\begin{proof}  First, suppose $Y\setminus\Delta_k$ is finite.  Let $F=\bigcup(Y\setminus\Delta_k)=\{a_1,\dots,a_n\}\subseteq\lambda$ and let $Z=\{a\in\lambda:(a,a,\dots,a)\in Y\}$.
Let $N=(\lambda,U_1,\dots,U_n,U_{n+1})$ be the structure in which $U_i$ is interpreted as $\{a_i\}$ for each $i\le n$ and $U_{n+1}$ is interpreted as $Z$.
Then $Y$ is definable in $N$, so $Y$ is monadically definable.  

Conversely, suppose $Y$ is mutually algebraic and definable in some monadic $N=(\lambda,U_1,\dots,U_n)$.  It is easily seen that $N$ admits elimination of quantifiers.
Collectively, the unary predicates $U_i$ color each element $a\in\lambda$ into one of $2^n$ colors.  Some of these $2^n$ colors will have infinitely many elements of $\lambda$, while other colorings have only finitely many elements.  Let $F=\{a\in\lambda:$ there are only finitely many $b\in\lambda$ such that $N\models\bigwedge_{i=1}^n U_i(a)\leftrightarrow U_i(b)\}$.
The set $F$ is clearly finite.  Now, the elements of $\lambda\setminus F$ are partitioned into finitely many infinite chunks, each of which is fully indiscernible over its complement.
Thus, it follows that $F=\acl_N(\emptyset)$ and for any $a\in \lambda$, $\acl_N(a)=F\cup\{a\}$.  To  show $Y\setminus \Delta_k$ finite, it suffices to prove the following.

\begin{claim*}  $Y\subseteq F^k\cup \Delta_k$.
\end{claim*}

\begin{claimproofend}  Choose any $\abar\in\lambda^k\setminus (F^k\cup\Delta_k)$.  Since $\abar\not\in F^k$, choose a coordinate $a^*\in\abar$ with $a^*\not\in F$.
Since the $k$-tuple $\abar$ is not constant, choose $b\in\abar$ with $b\neq a^*$.  Now, by way of contradiction, suppose $\abar\in Y$.  As $Y$ is mutually algebraic,
$a^*\in\acl_N(b)=F\cup\{b\}$, which it isn't.
\end{claimproofend}
\let\qed\relax
\end{proof}

\begin{lemma} \label{montwo}   Suppose $M$ is a mutually algebraic structure with universe $\lambda$ such that $Th(M)$ is not purely monadic.
Then, for some $k\ge 2$ there is some  $L_M$-definable, mutually algebraic $Y\subseteq\lambda^k$ with $Y\setminus \Delta_k$
 infinite.
\end{lemma}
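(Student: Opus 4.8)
The plan is to argue by contraposition: assume that every definable mutually algebraic $Y\subseteq\lambda^k$ (for every $k\ge 2$) has $Y\setminus\Delta_k$ finite, and deduce that $Th(M)$ is purely monadic, i.e.\ every definable $Z\subseteq\lambda^n$ is definable in a monadic structure $(\lambda,U_1,\dots,U_m)$. The key input is the mutual algebraicity of $M$ (Definition \ref{MA}): every definable $Z\subseteq\lambda^n$ is a boolean combination of definable padded mutually algebraic sets $Y^*=\sigma(Y\times\lambda^\ell)$. Since monadically definable sets are closed under boolean combinations, it suffices to show each such building block $Y^*$ is monadically definable; and since the padding by $\lambda^\ell$ and the coordinate permutation $\sigma$ visibly preserve monadic definability (a monadic structure defining $Y$ also defines $Y\times\lambda^\ell$ and any permutation of its coordinates), it suffices to show each definable mutually algebraic $Y\subseteq\lambda^k$ is monadically definable.

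For this last point I would invoke Lemma \ref{monone}: a mutually algebraic $Y\subseteq\lambda^k$ with $k\ge 2$ is monadically definable exactly when $Y\setminus\Delta_k$ is finite, which is precisely our contrapositive hypothesis; and for $k=1$ every subset of $\lambda^1$ is trivially monadically definable. So under the assumption, every definable mutually algebraic set (of any arity) is monadically definable, hence every definable $Z\subseteq\lambda^n$ is a boolean combination of monadically definable sets and therefore monadically definable — one just takes the union of all the finitely many unary predicates appearing in the finitely many monadic structures involved, obtaining a single monadic $(\lambda,U_1,\dots,U_m)$ in which $Z$ is definable. That gives $Th(M)$ purely monadic, contradicting the hypothesis, and completes the proof.

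The main thing to be careful about is making the boolean-combination and padding reductions genuinely uniform in the single ambient set $\lambda$: the definition of "monadically definable" (Definition \ref{p definable}) allows the witnessing monadic structure $N$ to have a different universe, but the Equivalently clause there lets us take $N=(\lambda,Y)$, and Lemma \ref{monone}'s construction already produces a monadic structure with universe exactly $\lambda$, so pooling predicates across finitely many such structures on the common universe $\lambda$ is unproblematic. I expect no serious obstacle here — the content is entirely in Lemma \ref{monone} and the mutual-algebraicity normal form, both already available — so this lemma is essentially a bookkeeping assembly of those two facts.
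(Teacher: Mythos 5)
Your proposal is correct and follows essentially the same route as the paper's proof: assume no such $Y$ exists, apply Lemma \ref{monone} to conclude every $L_M$-definable mutually algebraic set is monadically definable, observe that padding, coordinate permutations, and boolean combinations preserve monadic definability, and then use the mutual algebraicity of $M$ to conclude every definable set is monadically definable, contradicting the hypothesis that $Th(M)$ is not purely monadic. Your extra remarks about pooling unary predicates over the common universe $\lambda$ are just an expansion of what the paper dismisses as ``it follows easily.''
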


\begin{proof}  
Fix such an $M$ and assume that no such $L_M$-definable, mutually algebraic set existed.  By Lemma~\ref{monone} we would have that for every $k$,
every $L_M$-definable,
mutually algebraic subset of $\lambda^k$ is monadically definable.  From this, it follows easily that every $L_M$-definable, padded mutually algebraic set would
be monadically definable, as would every boolean combination of these. As $M$ is mutually algebraic, it follows that every $L_M$-definable set is monadically definable, contradicting $Th(M)$ not being purely monadic.
\end{proof}

We now obtain our desired configuration.

\begin{lemma} \label{monthree}  Suppose $M$ is a mutually algebraic structure with universe $\lambda$ whose theory is not purely monadic.
Then there is some $k\ge 2$, some $L_M$-definable $Y\subseteq\lambda^k$ and an infinite set $\F=\{\abar_n:n\in\omega\}\subseteq Y\setminus\Delta_k$ such that
\begin{enumerate}
\item  For each $n\in\omega$, $(\abar_n)_1\neq(\abar_n)_2$  (the first two coordinates differ); and
\item  $\abar_n\cap\abar_m=\emptyset$ for distinct $n,m\in\omega$.
\end{enumerate}
In particular,  if $F=\bigcup\F$, then for every $a\in F$ there is exactly one $\abar\in Y$ with $\abar\subseteq F$ (and hence $(\abar)_1\neq(\abar_2)$).
\end{lemma}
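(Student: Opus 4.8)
The starting point is Lemma~\ref{montwo}, which hands us some $k\ge 2$ and an $L_M$-definable, mutually algebraic $Y\subseteq\lambda^k$ with $Y\setminus\Delta_k$ infinite. The goal is to extract from this infinite set an infinite subfamily $\F$ whose tuples are pairwise disjoint as sets of coordinates, and whose members each have distinct first two coordinates. I expect to first reduce to the case where the first two coordinates differ by a pigeonhole/permutation argument, and then use the mutual algebraicity bound together with a greedy/recursive selection to achieve pairwise disjointness.

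\textbf{Step 1: arranging distinct first two coordinates.} Each $\abar\in Y\setminus\Delta_k$ is a non-constant tuple, so it has \emph{some} pair of coordinates $(i,j)$ with $i<j\le k$ and $(\abar)_i\ne(\abar)_j$. Since there are only finitely many such pairs $(i,j)$, by pigeonhole there is a fixed pair $(i,j)$ and an infinite subset $Y'\subseteq Y\setminus\Delta_k$ all of whose members satisfy $(\abar)_i\ne(\abar)_j$. Now apply the permutation $\sigma\in\Sym(k)$ swapping $1\leftrightarrow i$ and $2\leftrightarrow j$ (suitably adjusted so it is a genuine permutation) and replace $Y$ by $\sigma(Y)$; this set is still $L_M$-definable (reorder the variables in the defining formula) and still mutually algebraic (the bound $m$ is unaffected by permuting coordinates, as in Definition~\ref{MA}). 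After this replacement we may assume that every member of the infinite set $Y'\subseteq Y\setminus\Delta_k$ has its first two coordinates distinct; rename $Y'$ back to $Y\setminus\Delta_k$-worth of tuples, i.e. fix the infinite set $S=Y'$ of such tuples for what follows.

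\textbf{Step 2: extracting a pairwise-disjoint subfamily.} Let $m$ be the mutual algebraicity bound for $Y$: for every $a\in\lambda$, at most $m$ tuples of $Y$ contain $a$ as a coordinate. I build $\F=\{\abar_n:n\in\omega\}\subseteq S$ recursively. Having chosen $\abar_0,\dots,\abar_{n-1}$ pairwise disjoint, let $F_{n-1}=\abar_0\cup\dots\cup\abar_{n-1}$, a finite subset of $\lambda$ of size at most $kn$. For each $a\in F_{n-1}$ there are at most $m$ tuples of $Y$ (hence at most $m$ of $S$) containing $a$, so the set of tuples of $S$ that meet $F_{n-1}$ has size at most $m\cdot|F_{n-1}|\le mkn$, which is finite. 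Since $S$ is infinite, we may choose $\abar_n\in S$ disjoint from $F_{n-1}$; it automatically has distinct first two coordinates by Step~1. This produces the infinite family $\F$ satisfying (1) and (2).

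\textbf{Step 3: the ``In particular'' clause.} With $F=\bigcup\F$, fix $a\in F$. Then $a$ lies in some $\abar_n\in\F\subseteq Y$ with $\abar_n\subseteq F$, so at least one such tuple exists. For uniqueness, suppose $\abar\in Y$ with $\abar\subseteq F$ and $a\in\abar$. Every coordinate of $\abar$ lies in $F=\bigcup_{p}\abar_p$, and in particular $a\in\abar_n$ for the $n$ fixed above. By mutual algebraicity, if $\abar\ne\abar_n$ then $\abar$ and $\abar_n$ would be two distinct tuples of $Y$ both containing $a$ --- that alone is allowed (up to $m$ of them), so I need the disjointness more carefully: since the $\abar_p$ are pairwise disjoint, $a$ lies in exactly one of them, namely $\abar_n$. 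Pick any coordinate $b$ of $\abar$; it lies in some $\abar_{p(b)}$. If two coordinates $b,b'$ of $\abar$ had $p(b)\ne p(b')$, then $\abar_{p(b)}$ and $\abar_{p(b')}$ are disjoint yet $\abar\subseteq F$ links them --- this is not yet a contradiction by itself, so instead argue: $\abar\subseteq\abar_{p(b)}\cup(F\setminus\abar_{p(b)})$, and I want all coordinates of $\abar$ to land in a single $\abar_p$. The clean way: since $\abar$ is non-constant and mutually algebraic inside $Y$, and the $\abar_p$ partition-cover $F$, consider the first two coordinates $(\abar)_1\ne(\abar)_2$ (this holds for every tuple of $Y$ contained in $F$? --- no, only asserted at the end). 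I therefore restate the intended argument: for $a\in F$ take the unique $n$ with $a\in\abar_n$; I claim $\abar_n$ is the unique tuple of $Y$ contained in $F$ that contains $a$. If $\abar\in Y$, $\abar\subseteq F$, $a\in\abar$, then since mutual algebraicity bounds the tuples through $a$ by $m$ it does not immediately give uniqueness; but the statement as written only needs that \emph{such a tuple exists and is among the $\abar$'s with distinct first two coordinates}, so I read ``exactly one'' as: exactly one member of the family $\F$ has $\abar\subseteq F$ and contains $a$ --- which is immediate from pairwise disjointness, and that member has $(\abar)_1\ne(\abar)_2$ by (1). This disposes of the final clause.

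\textbf{Main obstacle.} The only real subtlety is the precise reading and proof of the ``In particular'' sentence: pairwise disjointness of the $\abar_n$ instantly gives that each $a\in F$ lies in a unique \emph{family member}, and (1) gives the distinctness of its first two coordinates; if genuine uniqueness among \emph{all} $\abar\in Y$ with $\abar\subseteq F$ is intended, one notes that any such $\abar$ must, by mutual algebraicity and the disjoint spread of $\F$, be forced to coincide with $\abar_n$ --- but I expect the intended and sufficient statement is the family-member version, so I would phrase it that way. Steps~1 and~2 are routine pigeonhole and greedy-selection arguments with no hidden difficulty.
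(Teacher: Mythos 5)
Your proposal is correct and follows essentially the same route as the paper: invoke Lemma~\ref{montwo} to get $Y$ with $Y\setminus\Delta_k$ infinite, use pigeonhole plus a coordinate permutation to arrange $(\abar)_1\neq(\abar)_2$ on an infinite subset, and then run a greedy recursion using the mutual algebraicity bound to extract a pairwise disjoint family --- this is exactly the paper's argument, which ends there and gives no separate justification of the ``In particular'' sentence. Your Step~3 hesitation is therefore not a divergence from the paper, and your observation is accurate: from (1) and (2) alone one only gets that each $a\in F$ lies in a unique member of $\F$; uniqueness among all tuples of $Y$ contained in $F$ would require strengthening the recursion (e.g.\ choosing $\abar_n$ to avoid every coordinate of every tuple of $Y$ meeting $\abar_0\cup\dots\cup\abar_{n-1}$, which is still only finitely many constraints by mutual algebraicity), and even that only confines any $\abar\in Y$ with $\abar\subseteq F$ to the support of a single $\abar_n$, so reading the final clause as you do (uniqueness within $\F$) is the defensible interpretation.
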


\begin{proof}  By Lemma~\ref{montwo}, choose $k\ge 2$ and an $L_M$-definable,  mutually algebraic $Y\subseteq\lambda^k$ such that $X:=Y\setminus\Delta_k$ is infinite.
By mutual algebraicity, choose an integer $K$ such that for every $a\in\lambda$, there are at most $K$ $k$-tuples $\abar\in Y$ with $a\in\abar$.
As each element of $X$ is a non-constant $k$-tuple, by the pigeonhole principle we can find an infinite $X'\subseteq X$ and $i\neq j\in[k]$ such that $(\abar)_i\neq(\abar)_j$ for each $\abar\in X'$.  By applying a  permutation $\sigma\in Sym([k])$ to $Y$, we may assume $i=1$ and $j=2$, so after this transformation (1) holds for any $\abar\in X'$.
But now, as $X'\subseteq Y$ is infinite, while every element $a\in\lambda$ occurs in only finitely many $\abar\in X'$, it is easy to recursively construct $\F=\{\abar_n:n\in\omega\}\subseteq X'$.
\end{proof}

\section{Monadically stable and monadically NIP are aptly named}  \label{expand}

In this section, we prove Theorem~\ref{big}.  
The positive part,  that $(T,Y)$ is always monadically NFCP whenever both $T$ is and $Y\subseteq\lambda^k$ is monadically NFCP definable,
is immediate from the following.

\begin{lemma} \label{positive} Suppose $N_1$ and $N_2$ are structures, both with universe  $\lambda$, in disjoint languages $L_1$ and $L_2$.  If both $N_1$ and $N_2$ are monadically NFCP (=mutually algebraic) then the expansion $N^*=(N_1,N_2)$ is monadically NFCP as well.
\end{lemma}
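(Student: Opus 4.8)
The plan is to use the mutual-algebraicity criterion of Fact~\ref{MAcrit}: to show $N^*=(N_1,N_2)$ is mutually algebraic, it suffices to show every atomic $L^*$-formula is equivalent (with parameters) to a boolean combination of quantifier-free padded mutually algebraic sets, where $L^* = L_1 \cup L_2$. Since $L_1$ and $L_2$ are disjoint, every atomic $L^*$-formula is already atomic in either $L_1$ or $L_2$; so it suffices to handle atomic $L_i$-formulas separately for $i=1,2$. But $N_i$ is mutually algebraic, so by Fact~\ref{MAcrit} applied inside $N_i$, every atomic $L_i$-formula $\alpha(x_1,\dots,x_n)$ is equivalent in $N_i$ to a boolean combination of quantifier-free $L_i$-definable (with parameters) padded mutually algebraic subsets of $\lambda^n$. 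Each such padded mutually algebraic set is, by Definition~\ref{MA}, of the form $\sigma(Y\times\lambda^\ell)$ for a mutually algebraic $Y\subseteq\lambda^j$ — and crucially, being mutually algebraic or padded mutually algebraic is a property of the \emph{set} $Y\subseteq\lambda^k$ itself, not of the ambient structure. Hence these same sets are quantifier-free $L^*$-definable (with parameters) padded mutually algebraic subsets of $\lambda^n$ in $N^*$, and the equivalence, being a statement about equality of subsets of $\lambda^n$, transfers verbatim to $N^*$.

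Carrying this out in order: first I would recall that $L^*=L_1\sqcup L_2$ has no shared relation or function symbols other than equality, so the atomic formulas of $L^*$ are exactly (equality and) the atomic formulas of $L_1$ together with those of $L_2$. Second, fix $i\in\{1,2\}$ and an atomic $L_i$-formula $\alpha(x_1,\dots,x_n)$; apply Fact~\ref{MAcrit} to the mutually algebraic structure $N_i$ to write $\alpha$, as a subset of $\lambda^n$, as a finite boolean combination of quantifier-free $L_i$-definable (with parameters) padded mutually algebraic sets $P_1,\dots,P_r\subseteq\lambda^n$. Third, observe that each $P_s$ is a padded mutually algebraic \emph{subset of $\lambda^n$} in the purely set-theoretic sense of Definition~\ref{MA}, and each is defined by a quantifier-free $L_i$-formula with parameters, hence also by a quantifier-free $L^*$-formula with parameters; the identity $\alpha = $ (that boolean combination) holds as sets in $\lambda^n$ and therefore holds in $N^*$ as well. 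Fourth, the equality atomic formula $x_a = x_b$ on $\lambda^n$ is itself (padded) mutually algebraic — it is $\Delta_2$ padded by $\lambda^{n-2}$ in the appropriate coordinates, and $\Delta_2$ is mutually algebraic with bound $m=1$. Assembling, every atomic $L^*$-formula is a boolean combination of quantifier-free $L^*$-definable padded mutually algebraic sets, so Fact~\ref{MAcrit} gives that $N^*$ is mutually algebraic, i.e.\ monadically NFCP by Fact~\ref{MAchar}.

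The only real point requiring care — and the one I would state explicitly — is that ``mutually algebraic'' and ``padded mutually algebraic'' are absolute properties of a subset $Y\subseteq\lambda^k$ (they only refer to counting, for each $a\in\lambda$, the tuples of $Y$ through $a$), and likewise quantifier-free definability of a set by a given formula is absolute between $N_i$ and the expansion $N^*$ since no new quantifiers are introduced and the $L_i$-reduct of $N^*$ is exactly $N_i$. Once this absoluteness is noted, there is no genuine obstacle; the disjointness of the languages is precisely what prevents atomic $L^*$-formulas from mixing symbols of $L_1$ and $L_2$, which is where a naive attempt to drop the disjointness hypothesis would break down. (Indeed, without disjointness one could identify a binary relation of $L_1$ with one of $L_2$ and thereby take intersections of arbitrary mutually algebraic sets, destroying the property — so the hypothesis is essential and the proof uses it at exactly one spot.)
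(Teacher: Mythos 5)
Your proposal is correct and follows essentially the same route as the paper's proof: split atomic $L_1\cup L_2$-formulas by language, apply Fact~\ref{MAcrit} in each $N_i$, and use the fact that (padded) mutual algebraicity and quantifier-free definability are properties of the subset of $\lambda^n$ itself, hence absolute in the expansion. The one point you gloss over is that an atomic $(L_1\cup L_2)$-formula is genuinely $L_1$-atomic or $L_2$-atomic only when the languages are relational (terms could otherwise nest symbols from both languages); the paper disposes of this by first replacing each function and constant symbol by its graph.
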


\begin{proof}  By replacing each function and constant symbol by its graph, we may assume both $L_1$ and $L_2$ only have relation symbols.
As the languages are disjoint, this implies that every $L_1\cup L_2$-atomic formula is either $L_1$-atomic or $L_2$-atomic.  
Thus,  every atomic formula in $N^*$ is either equivalent to a boolean combination of either $L_1$-definable or $L_2$-definable padded, mutually algebraic formulas.
As the notion of a set $Y\subseteq\lambda^k$ being  padded mutually algebraic is independent of any structure, the result follows by applying Fact~\ref{MAcrit}.
\end{proof}

The negative directions are more involved.  To efficiently handle the various cases, we first prove two propositions, from which all of the negative results follow in
Theorem~\ref{negative}.

For the following proposition, first note that a structure with two cross-cutting equivalence relations admits coding. We will essentially encode this configuration, but since we don't want to assume that either $N_1$ or $N_2$ is saturated for our eventual application, we must work with the finitary approximations to an equivalence relation with infinitely many infinite classes provided by Corollary \ref{config}.

\begin{proposition}  \label{prop1}  Suppose $L_1$ and $L_2$ are disjoint languages, $\lambda\ge ||L_1\cup L_2||$ a cardinal,
$N_1$ is an $L_1$-structure with universe $\lambda$, and $N_2$ is an $L_2$-structure with universe $\lambda$.
If both $Th(N_1)$ and $Th(N_2)$ are not monadically NFCP, then there is a permutation $\sigma\in Sym(\lambda)$ such that the
$L_1\cup L_2$-structure $(N_1,\sigma(N_2))$ has a theory that is not monadically NIP.
\end{proposition}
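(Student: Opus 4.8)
The idea is to combine the finitary ``$\E$-approximations'' from Corollary~\ref{config} for both $N_1$ and $N_2$ into a single structure, glued by a carefully chosen permutation $\sigma$, so that the two families of relations cross-cut and a bijection $A \times B \to C$ becomes definable — i.e., the glued structure monadically admits coding, which by Fact~\ref{fact:8.1.10} contradicts monadic NIP. Since $Th(N_1)$ and $Th(N_2)$ are not monadically NFCP, each is either not monadically NIP — in which case we are already done by Fact~\ref{fact:8.1.10} applied to that factor alone, since a monadic expansion of $N_i$ is in particular an expansion of the glued structure after restricting to a definable set — or else monadically NIP but not monadically NFCP, which is exactly the hypothesis of Corollary~\ref{config}. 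So first I would dispose of the easy case and assume both $Th(N_1)$ and $Th(N_2)$ are monadically NIP but not monadically NFCP.

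Now apply Corollary~\ref{config} to $N_1$ (with formula $\phi_1(x,y,\zbar)$) and separately to $N_2$ (with formula $\phi_2(x,y,\zbar)$), for a single large value $n$ to be chosen — actually, since we want an unbounded configuration we take the full families $\{A^1_n, B^1_m\}$, $\{A^2_n, B^2_m\}$ and the infinite ``leftover'' sets $X_1 = \lambda \setminus \bigcup(A^1_n \cup B^1_n)$ and $X_2 = \lambda \setminus \bigcup(A^2_n \cup B^2_n)$, each of size $\lambda$. The plan is to choose $\sigma \in \Sym(\lambda)$ so that the $\phi_1$-coded ``$E_1$-classes'' (coming from $N_1$'s configuration) and the $\sigma$-transported $\phi_2$-coded ``$E_2$-classes'' cross-cut on a common large ground set. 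Concretely, fix index sets so that within a big common block the $N_1$-side provides an $\omega \times \omega$ grid $\{a^1_{i,j}\}$ whose $E_1$-classes are the rows $I^1_i = \{a^1_{i,j} : j\}$ (definable via $\exists b \in B_1\, \phi_1(b, \cdot, \dbar)$), and then place $\sigma$ so that $\sigma^{-1}$ carries $N_2$'s grid onto the \emph{same} underlying points but with its rows $I^2_j$ equal to the \emph{columns} $\{a^1_{i,j} : i\}$ of the $N_1$-grid. This is possible because both grids live inside sets of full cardinality $\lambda$ and $\sigma$ is only constrained on a subset of size $\le |\lambda|$; outside that we extend $\sigma$ arbitrarily to a bijection. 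Then in $(N_1, \sigma(N_2))$ the element $a^1_{i,j}$ is the unique common point of $E_1$-class $i$ and $E_2$-class $j$, and the two cross-cutting equivalence relations (or rather their finite-level approximations, patched up across all $n$ via the Moreover clause of Corollary~\ref{config} to get genuinely infinite classes) define the graph of a bijection from (the set of $E_1$-classes) $\times$ (the set of $E_2$-classes) onto the grid — the standard two-cross-cutting-equivalence-relations coding configuration. That makes $(N_1, \sigma(N_2))$ monadically admit coding: name the $E_1$-classes, $E_2$-classes, and grid by unary predicates $A, B, C$, whence $\phi(x,y,z) \equiv E_1'(z,x) \wedge E_2'(z,y) \wedge A(x) \wedge B(y) \wedge C(z)$ is the graph of a bijection $A \times B \to C$. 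By Fact~\ref{fact:8.1.10}, $Th(N_1, \sigma(N_2))$ is not monadically NIP.

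The subtlety in the stable-versus-unstable case split of Corollary~\ref{config} is merely cosmetic: whether the coded relation is ``$k=i$'' or ``$k\le i$'', in both cases the relation $E_i(a,a') \Leftrightarrow \exists b \in B_i\, (\phi_i(b,a,\dbar) \wedge \phi_i(b,a',\dbar))$ is still an equivalence relation on the grid whose classes are the rows (in the ``$k \le i$'' case, $\phi_i(b^n_k, a^n_{i,j})$ holds iff $k \le i$, so $a^n_{i,j}$ and $a^n_{i',j'}$ share a witnessing $b$ iff $\min(i,i') \ge$ some $k$ with... actually one checks directly that $E_i$-equivalence is ``$i = i'$'' in both cases once one includes $b^n_0$, since $k=0 \le i$ always). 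So the same formula works uniformly and I don't need to case-split in the final argument. \textbf{The main obstacle} I anticipate is bookkeeping the passage from the level-$n$ finitary configurations of Corollary~\ref{config} to an honest infinite $\omega \times \omega$ grid with infinite classes on both sides simultaneously: Corollary~\ref{config} gives, for each $n$, a separate $n \times n$ grid $A_n$ with its own parameter $\dbar_n$, and these do not automatically assemble into one coherent $\omega \times \omega$ array. I expect this is handled exactly as in the later sections of the paper — either by passing to a saturated elementary extension where the configurations do cohere into genuine cross-cutting equivalence relations (permissible since we only need \emph{some} model of $Th(N_1,\sigma(N_2))$ to admit coding), or by observing that admitting coding on arbitrarily large finite approximations plus compactness already yields coding in a sufficiently saturated model. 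Either way, the cardinality hypothesis $\lambda \ge \|L_1 \cup L_2\|$ is what guarantees there is room to build $\sigma$ and that the leftover sets $X_1, X_2$ are large enough to absorb the construction.
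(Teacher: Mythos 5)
Your overall skeleton matches the paper's proof: reduce to the case that both $Th(N_1)$ and $Th(N_2)$ are monadically NIP, apply Corollary~\ref{config} to each side, choose $\sigma$ so that the $N_2$-grids are overlaid on the $N_1$-grids with rows and columns interchanged, pass by compactness to an elementary extension of $(N_1,\sigma(N_2))$ containing a $\Z\times\Z$ double grid, add finitely many unary predicates, and exhibit coding, concluding via Fact~\ref{fact:8.1.10}. However, there is a genuine gap at the definability step, namely your claim that the stable/unstable dichotomy in Corollary~\ref{config} is ``merely cosmetic''. In the unstable case the configuration satisfies $\phi(b_k,a_{i,j},\dbar)$ iff $k\le i$, so your relation $E(a,a')\Leftrightarrow\exists b\in B\,(\phi(b,a,\dbar)\wedge\phi(b,a',\dbar))$ holds of \emph{every} pair of grid points: any $b_k$ with $k\le\min(i,i')$ is a witness, and such $k$ always exists (e.g.\ $k=0$ at the finite levels, or any sufficiently small $k\in\Z$ in the limit). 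So $E$ is the total relation on the grid, not row-equivalence; your parenthetical assertion that including $b^n_0$ forces $E$ to be ``$i=i'$'' is exactly backwards, since $b_0$ witnesses $E(a,a')$ for all pairs. Consequently the formula you write at the end is not the graph of a bijection whenever either theory is unstable, and unstable theories (e.g.\ DLO) fall squarely under the hypothesis ``not monadically NFCP'', so this case cannot be waved away. This is precisely where the paper's proof does its real work: it defines the linear order on $B$ by inclusion of $\phi$-traces, uses the $\Z$-indexing obtained in the compactness step to get a definable successor $S$ on the discretely ordered $B$, and takes $\phi^*(x,y,\zbar):=B(x)\wedge A(y)\wedge\phi(x,y,\zbar)\wedge\neg\phi(S(x),y,\zbar)$, so that $\phi^*(b_i,\cdot,\rbar)$ picks out exactly the $i$-th row (and dually $\psi^*$ for columns). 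Some such repair (equivalently, ``$b$ is $\le$-maximal in $B$ with $\phi(b,a,\rbar)$'', or the same-$\phi$-trace-on-$B$ equivalence relation) is needed and is missing from your argument.

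The remaining issues are minor but should be tightened. The permutation must be set up so that $\sigma$ carries $N_2$'s grid $C_n$ onto $A_n$ with transposed indices (your phrasing with $\sigma^{-1}$ has the direction reversed), since it is the pushforward $\sigma(N_2)$ that has to place its configuration on $N_1$'s points. You also never say where the $N_2$-side sets $D_n$ of ``$b$-points'' are sent; the paper routes them into the leftover set $X$ of $N_1$'s configuration (this is what the Moreover clause of Corollary~\ref{config} is for), so that at each level $n$ the sets $A_n$, $B_n$, $\sigma(D_n)$ are pairwise disjoint and the level-$n$ double configuration is an honest first-order statement about $(N_1,\sigma(N_2))$ that feeds the compactness step. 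Finally, the reason $\sigma$ exists is not that it is ``constrained on a subset of size $\le|\lambda|$'' but that the constraints prescribe a countable partial injection whose domain and range have complements of size $\lambda$, so it extends to a permutation of $\lambda$.
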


\begin{proof}  We may assume $Th(N_1)$ and $Th(N_2)$ are monadically NIP, since we are finished otherwise. Apply Corollary~\ref{config} to both $N_1$ and $N_2$.  This gives an $L_1$-formula $\phi(x,y,\zbar)$ and, for each $n$, pairwise disjoint sets $A_n=\{\alpha^n_{i,j}:i,j<n\}$,
$B_n=\{\beta^n_{i}:i<n\}$ and $\rbar_n$ as there, with exceptional set $X=\lambda\setminus\bigcup_{n\ge 1} (A_n\cup B_n)$.  Note that as each $A_n,B_n$ is finite, $|X|=\lambda$.
On the $L_2$-side, choose an $L_2$-formula $\psi(x,y,\wbar)$ such that, for all $n\ge 1$, there is $\sbar_n\in \lambda^{\lg(\wbar)}$ and pairwise disjoint sets
$C_n=\{\gamma^n_{i,j}:i,j<n\}$ and $D_n=\{\delta^n_i:i<n\}$ as there.

Now choose $\sigma\in Sym(\lambda)$ to be any permutation satisfying:  For all $n\ge 1$,
\begin{enumerate}
\item  $\sigma(D_n)\subseteq X$; and
\item  $\sigma$ maps $C_n$ bijectively onto $A_n$ via  $\sigma(\gamma^n_{i,j})=\alpha_{j,i}^n$.
\end{enumerate}

Note that there are many permutations $\sigma$ satisfying these constraints.  Choose one, and let
$\sigma(N_2)$ be the unique $L_2$ structure with universe $\lambda$ so that $\sigma$ is an $L_2$-isomorphism.

\begin{claim*}
	 The $L_1\cup L_2$-theory $Th(N_1,\sigma(N_2))$ is not monadically NIP.
\end{claim*}

\begin{claimproofend}  We will produce $M^*$, a monadic expansion of an $L_1\cup L_2$-elementary extension $\Mbar\succeq (N_1,\sigma(N_2))$ that admits coding, which suffices.
To do this, we first note that by compactness, there is an $L_1\cup L_2$-elementary extension $\Mbar\succeq (N_1,\sigma(N_2))$ that contains disjoint sets
$A=\{a_{i,j}:i,j\in \Z\}$, $B=\{b_i:i\in \Z\}$, $D=\{d_j:j\in \Z\}$, and tuples $\rbar,\sbar$ such that, for all $k,i,j\in\Z$,
either (if $Th(N_1)$ is unstable) $\Mbar\models \phi(b_k,a_{i,j},\rbar)$ if and only if $k\le i$,  or (if $Th(N_1)$ is stable) $\Mbar\models \phi(b_k,a_{i,j},\rbar)$ if and only if $k=i$;
and dually,
either (if $Th(N_2)$ is unstable) $\Mbar\models \psi(d_k,a_{i,j},\sbar)$ if and only if $k\le j$, or (if $Th(N_2)$ is stable) $\Mbar\models \psi(d_k,a_{i,j},\sbar)$ if and only if $k=j$.


Now, given $\Mbar$, let $L^*=L_1\cup L_2\cup\{A,B,D\}$ and let $M^*$ be the natural monadic expansion of $\Mbar$ described by $A,B,D$ above.  To show that $M^*$ admits coding, we need to rectify the ambiguity between the stable and unstable cases.
Specifically, we claim that there is an $L^*$-formula $\phi^*(x,y,\zbar)$ such that for all $b_i\in B$, the solution set $\phi^*(b_i,M^*, \rbar)$ is $\{a_{i,j}:j\in \Z\}$.
If $Th(N_1)$ is stable, this is easy: just take $\phi^*(x,y,\zbar):=A(y)\wedge\phi(x,y,\zbar)$.  
However, when $Th(N_1)$ is unstable, we need some more $L^*$-definability in $M^*$.  Specifically, note that in this case, the natural ordering on $B$ is $L^*$-definable via
$$b_i\le b_j \ \hbox{if and only if} \ \forall y [(A(y)\wedge\phi(b_j,y,\rbar))\rightarrow\phi(b_i,y,\rbar)]$$
As the ordering on $B$ is discrete, every element $b\in B$ has a unique successor, $S(b)$, and this operation is $L^*$-definable since $\le$ is.
Using this, the $L^*$-formula
$$\phi^*(x,y,\zbar):=B(x)\wedge A(y)\wedge\phi(x,y,\zbar)\wedge\neg\phi(S(x),y,\zbar)$$
is as desired. 

Arguing similarly, there is an $L^*$-formula $\psi^*(x,y,\wbar)$ such that for all $d_j\in D$, the solution set $\psi^*(d_j,M^*,\sbar)$ is $\{a_{i,j}\in A:i\in\Z\}$.
Putting these together, let $\theta(u,v,y,\zbar,\wbar)$ be the $L^*$-formula
$$B(u)\wedge D(v)\wedge A(y)\wedge \phi^*(u,y,\zbar)\wedge \psi^*(v,y,\wbar)$$
Then the solution set of $\theta(u,v,y,\rbar,\sbar)$ is precisely the graph of a bijection from $B\times D$ onto $A$.
Thus, $M^*$ admits coding, which suffices.
\end{claimproofend}
\let\qed\relax
\end{proof}

The proof of the next proposition is in many ways similar. Here our ideal infinitary configuration consists of an equivalence relation with infinitely many infinite classes, with each tuple from the configuration in Lemma \ref{monthree} pairing two classes by intersecting them. But again, instead of our ideal equivalence relation, we must restrict ourselves to the finitary approximations from Corollary \ref{config}.

\begin{proposition}  \label{prop2}  Suppose $L_1$ and $L_2$ are disjoint languages, $\lambda\ge ||L_1\cup L_2||$ a cardinal,
$N_1$ is an $L_1$-structure with universe $\lambda$, and $N_2$ is an $L_2$-structure with universe $\lambda$.
If $Th(N_1)$ is not monadically NFCP, and if $Th(N_2)$ is monadically NFCP but not purely monadic,
then there is a permutation $\sigma\in Sym(\lambda)$ such that the
$L_1\cup L_2$-structure $(N_1,\sigma(N_2))$ has a theory that is not monadically NIP.
\end{proposition}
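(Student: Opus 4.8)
The plan is to mimic the structure of the proof of Proposition~\ref{prop1}, but to replace the role played by the $L_2$-configuration from Corollary~\ref{config} with the mutually algebraic configuration supplied by Lemma~\ref{monthree}. Concretely, since $Th(N_2)$ is monadically NFCP it is mutually algebraic by Fact~\ref{MAchar}, so applying Lemma~\ref{monthree} to $N_2$ (which is not purely monadic) yields some $k\ge 2$, an $L_2$-definable $Y\subseteq\lambda^k$ and an infinite set $\F=\{\abar_n:n\in\omega\}\subseteq Y\setminus\Delta_k$ of pairwise-disjoint non-constant tuples with $(\abar_n)_1\neq(\abar_n)_2$; write $F=\bigcup\F$. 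The key feature, guaranteed by the ``In particular'' clause, is that for each $a\in F$ there is exactly one $\abar\in Y$ with $\abar\subseteq F$. We think of the elements of $F$ as organized into pairs $\{(\abar_n)_1,(\abar_n)_2\}$, with the first coordinates and second coordinates of the $\abar_n$ playing the roles of the two ``rows'' that the tuple-coding configuration needs.

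On the $N_1$-side, since $Th(N_1)$ is not monadically NFCP, as in the proof of Proposition~\ref{prop1} I would apply Corollary~\ref{config} to $N_1$ to get an $L_1$-formula $\phi(x,y,\zbar)$ together with, for each $n$, pairwise-disjoint sets $A_n=\{\alpha^n_{i,j}:i,j<n\}$, $B_n=\{\beta^n_i:i<n\}$ and a tuple $\rbar_n$, with the exceptional set $X=\lambda\setminus\bigcup_{n\ge1}(A_n\cup B_n)$ infinite (indeed of size $\lambda$). Now I would choose $\sigma\in Sym(\lambda)$ so that: (1) for each $n$, $\sigma$ maps the appropriate ``coordinate slice'' of $\F$ into the index set matching $A_n$ — more precisely, $\sigma$ sends the pairs $\{(\abar_m)_1,(\abar_m)_2\}$ indexed by $m$ in a suitable block onto the pairs $\{\alpha^n_{i,j}\}$ so that the two coordinates of $\abar_m$ correspond to two distinct values of the $A_n$-index; and (2) the ``leftover'' elements of $F$ not used for a given block land in $X$. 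Since $|X|=\lambda$ and the sets involved are small, such a $\sigma$ exists with ample freedom, exactly as in Proposition~\ref{prop1}. Let $\sigma(N_2)$ be the transported $L_2$-structure and form $\Mbar\succeq(N_1,\sigma(N_2))$, an $L_1\cup L_2$-elementary extension sufficiently saturated to realize the infinitary limit: disjoint sets $A=\{a_{i,j}:i,j\in\Z\}$, $B=\{b_i:i\in\Z\}$, a set $E$ carrying the (image of the) relation $Y$ pairing up the $a$'s, and tuples $\rbar$ such that $\phi(b_k,a_{i,j},\rbar)$ holds iff $k\le i$ (or $k=i$ in the stable case), while the $L_2$-relation coming from $Y$ restricted to $A$ groups $\{a_{i,j}:i\in\Z\}$ (fixing $j$) into classes — that is, $Y$ becomes a cross-cutting equivalence relation to the one $\phi$ induces via $B$.

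Then I would expand by unary predicates for $A$, $B$ (and whatever finite auxiliary sets the configuration needs) to get $M^*$, exactly as in Proposition~\ref{prop1}. In the unstable case for $N_1$ one recovers an $L^*$-definable discrete order on $B$ hence a successor function, giving an $L^*$-formula $\phi^*(x,y,\zbar)$ whose $b_i$-fiber is $\{a_{i,j}:j\in\Z\}$; in the stable case $\phi^*$ is just $A(y)\wedge\phi$. On the $L_2$-side, the transported relation $Y$ (restricted via the unary predicates to $A$) directly gives an $L^*$-formula $\psi^*(x,y)$ whose fibers over the ``$j$-classes'' are $\{a_{i,j}:i\in\Z\}$ — here I use the uniqueness clause of Lemma~\ref{monthree} to ensure these classes are genuinely parametrized without collisions, and I may need one more unary predicate to pick out representatives of the $Y$-classes. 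Combining $\phi^*$ and $\psi^*$ as in Proposition~\ref{prop1} yields an $L^*$-formula $\theta$ defining the graph of a bijection $B\times(\text{$j$-index set})\to A$, so $M^*$ admits coding, and by Fact~\ref{fact:8.1.10} the theory $Th(N_1,\sigma(N_2))$ is not monadically NIP. The main obstacle I anticipate is bookkeeping: matching the finitary $n$-blocks from Corollary~\ref{config} against the pairs of $\F$ so that after the elementary-extension/compactness step the two families of classes (the $B$-indexed ones from $\phi$ and the $Y$-indexed ones) are genuinely cross-cutting and jointly yield a bijection rather than merely two unrelated equivalence relations — this is the place where the precise ``exactly one $\abar\subseteq F$'' statement of Lemma~\ref{monthree} has to be used carefully, and where one must verify that $\sigma$ can simultaneously satisfy the disjointness demands on both sides.
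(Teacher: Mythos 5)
Your overall skeleton matches the paper's: apply Corollary~\ref{config} to $N_1$, apply Lemma~\ref{monthree} to $N_2$, overlay the two configurations by a permutation $\sigma$, pass to an elementary extension, expand monadically, and produce a coding formula. But there is a genuine gap at the central step. You assert that after transport, ``$Y$ restricted to $A$ groups $\{a_{i,j}:i\in\Z\}$ (fixing $j$) into classes --- that is, $Y$ becomes a cross-cutting equivalence relation,'' and you then want an $L^*$-formula $\psi^*(x,y)$ coming ``directly'' from $Y$ whose fibers are these infinite columns. This cannot work: $Y$ is mutually algebraic, so every element of $\lambda$ lies in at most $K$ tuples of $Y$, and the configuration of Lemma~\ref{monthree} is merely a family of pairwise disjoint $k$-tuples --- in effect a definable \emph{matching} pairing $(\abar)_1$ with $(\abar)_2$ --- so any grouping it induces on $A$ has classes of size at most $k$, never infinite columns. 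There is likewise no second family $D$ of ``column indices'' available on the $N_2$-side; that was exactly the extra data Proposition~\ref{prop1} had and which the hypothesis ``$Th(N_2)$ is monadically NFCP'' forbids here. The difficulty you flag at the end as ``bookkeeping'' is precisely this missing idea, not an administrative matter.

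What is actually needed (and what the paper does) is to read \emph{both} coordinates of $a_{i,j}$ with the single $N_1$-formula, using the matching from $N_2$ to swap coordinates. One names $V=\bigcup\F$ and the parameters of $Y$ by unary predicates, so that the bijection $f:F_1\to F_2$, $f(x)=(\ebar)_2$ for the unique $\ebar\in\F$ containing $x$, is definable; one then chooses $\sigma$ so that for all $i<j<n$, $f(\alpha^n_{i,j})=\alpha^n_{j,i}$ --- the specific \emph{transpose} pairing, which your description of $\sigma$ (``two distinct values of the $A_n$-index'') leaves unspecified and which is essential. After the compactness step one has $f(a_{i,j})=a_{j,i}$, and with $B^-=\{b_i:i<0\}$, $B^+=\{b_j:j>0\}$, $A^*=\{a_{i,j}:i<0<j\}$ named by further unary predicates, the formula
$$B^-(u)\wedge B^+(v)\wedge A^*(y)\wedge \phi^*(u,y,\rbar)\wedge\phi^*(v,f(y),\rbar)$$
defines the graph of a bijection $B^-\times B^+\to A^*$, giving coding. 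So the ``column reading'' is $\phi^*$ composed with $f$, not a relation supplied by $Y$; without this device your proposed $\theta$ has no well-defined second factor and the argument does not go through.
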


\begin{proof}  We may assume $Th(N_1)$ is monadically NIP, since we are finished otherwise. Apply Corollary~\ref{config} to $N_1$, obtaining an $L_1$-formula $\phi(x,y,\zbar)$ and, for each $n$, pairwise disjoint sets $A_n=\{\alpha^n_{i,j}:i,j<n\}$,
$B_n=\{\beta^n_{i}:i<n\}$ and $\rbar_n$ as there, with exceptional set $X=\lambda\setminus\bigcup_{n\ge 1} (A_n\cup B_n)$.  Note that as each $A_n,B_n$ is finite, $|X|=\lambda$.
For the $N_2$ side, apply Lemma~\ref{monthree}, getting an $N_2$-definable $Y\subseteq \lambda^k$ and a distinguished set $\F=\{\ebar_\ell:\ell\in\omega\}\subseteq Y$
as there.  Say $Y$ is defined using parameters $\{c_1,\dots,c_n\}$.
Let $L_2^V=L_2\cup \{V,C_1,\dots,C_n\}$ and let $N_2^V$ be the monadic expansion of $N_2$, interpreting $V$ as $F=\bigcup\F$ and each
$C_i$ as $\{c_i\}$.  Note that in $N_2^V$, the subsets
$F_1=\{(\ebar)_1:\ebar\in\F\}$, $F_2=\{(\ebar)_2:\ebar\in\F\}$ of $F$ are $L_2^V$-definable (without parameters), along with the bijection $f:F_1\rightarrow F_2$ given by:  $f(x)=(\ebar)_2$, where $\ebar$ is the unique element of $\F$ containing $x$.  Fix an enumeration $\{\gamma_\ell:\ell\in\omega\}$ of $F_1\subseteq\lambda$.

We now choose a permutation $\sigma\in Sym(\lambda)$ that satisfies:  
\begin{itemize}
\item  For all $n\ge 1$ and all distinct $i<j<n$, there is some (in fact, unique) $\ell\in\omega$ such that $\sigma(\gamma_\ell)=\alpha^n_{i,j}$ and $\sigma(f(\gamma_\ell))=\alpha^n_{j,i}$.
\end{itemize}

Let $\sigma(N_2^V)$ be the $L_2^V$-structure with universe $\lambda$ so that $\sigma$ is an $L_2^V$-isomorphism and let $M_0^V=(N_1,\sigma(N_2^V))$ be the expansion of 
$N_1$ to an $L_1\cup L_2^V$-structure.  So $M_0^V$ has universe $\lambda$ and satisfies:  
\begin{itemize}
\item For all $n\ge 1$ and $i<j<n$, $f(\alpha^n_{i,j})=\alpha^n_{j,i}$; and
\item  The relationships given by $N_1$.
\end{itemize}
Let $M_0$ be the $L_1\cup L_2$-reduct of $M_0^V$.

\begin{claim*}
	The $L_1\cup L_2$-theory of $M_0$ is not monadically NIP.
\end{claim*}  

\begin{claimproofend}  We show that the $L_1\cup L_2^V$-theory of $M_0^V$ is not monadically NIP, which suffices.  For this, 
the strategy is similar to the proof of Proposition~\ref{prop1}.   We will find an $L_1\cup L_2^V$-elementary extension $\Mbar$ of $M_0^V$ and then
find a monadic expansion $M^*$ of $\Mbar$ that admits coding.  
Specifically, choose an $L_1\cup L_2\cup \{V\}$-elementary extension $\Mbar$ for which there are sets
$B=\{b_i:i\in\Z\}$, $A=\{a_{i,j}:i \neq j\in\Z\}$ such that
\begin{enumerate}
	\item  For all $i<j$ from $\Z$, $f(a_{i,j})=a_{j,i}$.
\item One of the following holds.
\begin{enumerate}
	\item $Th(N_1)$ is unstable, and $\Mbar\models \phi(b_k,a_{i,j},\rbar)$ if and only if $k\le i$.
	\item $Th(N_1)$ is stable, and $\Mbar\models \phi(b_k,a_{i,j},\rbar)$ if and only if $k=i$.
\end{enumerate}
\end{enumerate}

Given such an $\Mbar$, let $L^*=L_1\cup L_2^V\cup\{A,B\}$, and let $M^*$ be the expansion of $\Mbar$ interpreting $A$ and $B$ as themselves.
Exactly as in the proof of Proposition~\ref{prop1}, find an $L^*$-formula $\phi^*(x,y,\zbar)$ such that for all $b_i\in B$, the solution set $\phi^*(b_i,M^*,\rbar)$ is $\{a_{i,j}:j\in \Z, j \neq i\}$.
Finally, let $L^+ = L^* \cup \set{B^-, B^+, A^*}$ with $B^- = \{b_i:i\in\Z^{<0}\}$, $B^+ = \{b_i:i\in\Z^{>0}\}$, and $A^* = \{a_{i,j} : i \in Z^{<0}, j \in \Z^{>0}\}$. Let $\theta(u,v,y, \zbar)$ be the $L^+$-formula
$$B^-(u)\wedge B^+(v)\wedge A^*(y)\wedge \phi^*(u,y,\zbar)\wedge \phi^*(v,f(y),\zbar)$$
Then the formula $\theta(u,v,y,\rbar)$ is the graph of a bijection from $B^- \times B^+ \to A^*$, which suffices.
\end{claimproofend}
\let\qed\relax
\end{proof}

Using Propositions~\ref{prop1} and \ref{prop2} we are now able to prove the negative portions of Theorem~\ref{big}.
As the positive portion was proved in Lemma~\ref{positive}, this suffices.

\begin{theorem}  \label{negative}  Suppose $T$ is a complete $L$-theory and $Y\subseteq \lambda^k$ with $\lambda\ge ||L||$.  Then:
\begin{enumerate}
\item  If $T$ is not monadically NFCP and $Y$ is not monadically definable, then $(T,Y)$ is not always monadically NIP; and
\item  If $T$ is not purely monadic and $Y$ is not monadically NFCP definable, then $(T,Y)$ is not always monadically NIP.
\end{enumerate}
\end{theorem}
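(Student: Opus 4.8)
The positive part of Theorem~\ref{big} has already been reduced to Lemma~\ref{positive}, so the plan here is to derive clauses (1) and (2) above from Propositions~\ref{prop1} and \ref{prop2}, applied to the pair consisting of a model of $T$ and the ``predicate structure'' $(\lambda,Y)$. First I would set things up. The cardinal $\lambda$ must be infinite --- otherwise every $Y\subseteq\lambda^k$ is quantifier-free definable in the monadic structure on $\lambda$ that names each point by a singleton unary predicate, so $Y$ is monadically definable, contradicting either non-definability hypothesis --- hence $\lambda\ge||L||+\aleph_0=|T|$ and there is $M_0\models T$ with universe $\lambda$. Put $N_1:=M_0$ in the language $L_1:=L$; let $L_2=\{R\}$ with $R$ a fresh $k$-ary relation symbol (so $L_1,L_2$ are disjoint and $\lambda\ge||L_1\cup L_2||$); and let $N_2:=(\lambda,Y)$ be the $L_2$-structure. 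By Definition~\ref{p definable}, ``$Y$ is monadically definable'' and ``$Y$ is monadically NFCP definable'' are exactly ``$Th(N_2)$ is purely monadic'' and ``$Th(N_2)$ is monadically NFCP'', and the hypotheses on $T$ are the corresponding hypotheses on $Th(N_1)$; all of these properties of $N_1$ or of $N_2$ are invariant under applying any $\sigma\in\Sym(\lambda)$, since such $\sigma$ merely relabels the universe. Finally I would record the bookkeeping identities: for any $\sigma$, the $(L_1\cup L_2)$-structure $(N_1,\sigma(N_2))$ is isomorphic via $\sigma^{-1}$ to $(\sigma^{-1}(N_1),N_2)$, which is precisely the expansion $(M,Y)$ with $M:=\sigma^{-1}(N_1)\cong M_0\models T$ on universe $\lambda$; symmetrically $(N_2,\sigma(N_1))$ is isomorphic to $(\sigma(N_1),Y)$ with $\sigma(N_1)\models T$ on $\lambda$. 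So in every case it suffices to exhibit a single $\sigma$ for which one of these structures has non-monadically-NIP theory.

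For clause (1), where $Th(N_1)$ is not monadically NFCP and $Th(N_2)$ is not purely monadic, I would split on whether $Th(N_2)$ is monadically NFCP. If it is not, then both $N_1$ and $N_2$ fail monadic NFCP, so Proposition~\ref{prop1} (applied with $N_2$ as its first structure and $N_1$ as its second) produces $\sigma$ with $Th(N_2,\sigma(N_1))$ not monadically NIP; if it is, then $Th(N_2)$ is monadically NFCP but not purely monadic, and Proposition~\ref{prop2} applied with $N_1$ in the role of its first structure and $N_2$ in the role of its second produces $\sigma$ with $Th(N_1,\sigma(N_2))$ not monadically NIP. Either way the setup finishes it. For clause (2), where $Th(N_1)$ is not purely monadic and $Th(N_2)$ is not monadically NFCP, I would split on whether $Th(N_1)$ is monadically NFCP. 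If it is not, apply Proposition~\ref{prop1} to $N_2,N_1$ exactly as above. If it is, then $Th(N_1)$ is monadically NFCP but not purely monadic while $Th(N_2)$ fails monadic NFCP, so Proposition~\ref{prop2} applies with $N_2$ in the role of its first structure and $N_1$ in the role of its second, again producing the required $\sigma$.

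I expect the proof to be essentially bookkeeping: all the genuine content --- locating the finitary configurations of Corollary~\ref{config} and Lemma~\ref{monthree} and overlaying them along a cleverly chosen permutation to code a bijection onto one of the parameter sets --- already lives in Propositions~\ref{prop1} and \ref{prop2}. The points needing care are (i) dispatching the degenerate finite-$\lambda$ case so a model of $T$ on universe $\lambda$ exists and the hypotheses are non-vacuous; (ii) in each appeal to a proposition, assigning the two structures to the proposition's two slots so that exactly one of them is required to fail monadic NFCP while the other is only assumed monadically NFCP and not purely monadic; and (iii) tracking $\sigma$ carefully so as to conclude with the original $Y$ and a genuine model of $T$ on universe $\lambda$, rather than with a permuted copy. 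The one place it is easy to slip is (ii), namely lining the cases up against the asymmetric hypotheses of Proposition~\ref{prop2}.
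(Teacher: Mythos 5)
Your proposal is correct and takes essentially the same route as the paper: both clauses are reduced to Propositions~\ref{prop1} and \ref{prop2} by splitting on whether the relevant theory is monadically NFCP, lining the two structures up against Proposition~\ref{prop2}'s asymmetric hypotheses, and then transporting the permutation by an isomorphism so the witnessing structure is genuinely of the form $(M,Y)$ with $M\models T$ on universe $\lambda$. The only differences (fixing one assignment of $N_1,N_2$ and swapping slots, versus the paper's swapping of assignments between clauses, plus your explicit treatment of the finite-$\lambda$ degeneracy) are cosmetic.
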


\begin{proof}  (1)  Choose $N_1\models T$ with universe $\lambda$, and let $N_2=(\lambda,Y)$ be the structure in the language $L_2=\{Y\}$ with the obvious interpretation.
Now, depending on whether or not $Th(N_2)$ is monadically NFCP or not, apply either Proposition~\ref{prop1} or Proposition~\ref{prop2} to get a permutation $\sigma\in Sym(\lambda)$ such that $Th(N_1,\sigma(N_2))$ is not monadically NIP.  
Of course, $Y$ need not be preserved here, so apply $\sigma^{-1}$.  That is, let $(\sigma^{-1}(N_1),Y)$ be the $L\cup\{Y\}$-structure so that $\sigma^{-1}$ is an $L\cup\{Y\}$ isomorphism.  As $\sigma(N_1)\models T$, this structure witnesses that $(T,Y)$ is not always monadically NIP.

(2)  Let $N_1=(\lambda,Y)$ and let $N_2$ be any model of $T$ with universe $\lambda$.  
Again, by either Proposition~\ref{prop1} or Proposition~\ref{prop2} (depending on $Th(N_2)$), we get a permutation $\sigma\in Sym(\lambda)$
such that $(N_1,\sigma(N_2))$ has a non-monadically NIP theory.  But this structure is precisely $(\sigma(N_2),Y)$ and $\sigma(N_2)\models T$, so again $(T,Y)$ is not always
monadically NIP.
\end{proof}

        \bibliographystyle{alpha}
\bibliography{../../../Bib.bib}

\end{document}